\newtheorem{theorem}{Theorem}[section]
\newtheorem{lemma}[theorem]{Lemma}
\newtheorem{proposition}[theorem]{Proposition}
\newtheorem{corollary}[theorem]{Corollary}
\theoremstyle{definition}
\newtheorem{remark}[theorem]{Remark}
\newtheorem{example}[theorem]{Example}
\newtheorem*{ack}{Acknowledgements}
\numberwithin{equation}{section}
\title{Beyond Cheeger's constant}
\keywords{Cheeger's constant, convex sets, inradius, Sobolev-Poincar\'e inequalities.}
\date{\today}
\author[Brasco]{Lorenzo Brasco}
\address[L.\ Brasco]{Dipartimento di Matematica e Informatica
	\newline\indent
	Universit\`a degli Studi di Ferrara
	\newline\indent
	Via Machiavelli 35, 44121 Ferrara, Italy}
\email{lorenzo.brasco@unife.it}
\dedicatory{In memory of Rodolfo Boschi}
\begin{document}

\begin{abstract}
The Cheeger constant of an open set of the Euclidean space is defined by minimizing the ratio ``perimeter over volume'', among all its smooth compactly contained subsets. We consider a natural variant of this problem, where the volume of admissible sets is raised to any positive power. We show that for {\it sublinear} powers, all these generalized Cheeger constants are equivalent to the standard one, by means of a universal two-sided estimate. We also show that this equivalence breaks down for {\it superlinear} powers. In this case, some weird phenomena appear. We finally consider the case of convex planar sets and prove an existence result, under optimal assumptions.
\end{abstract}

\maketitle

\begin{center}
\begin{minipage}{10cm}
\small
\tableofcontents
\end{minipage}
\end{center}

\section{Introduction}

\subsection{Cheeger's constant}

We recall that the {\it Cheeger constant} of an open set $\Omega\subseteq\mathbb{R}^N$ is defined by the following constrained isoperimetric--type problem
\begin{equation}
\label{cheeger}
h_1(\Omega)=\inf\left\{\frac{\mathcal{H}^{N-1}(\partial E)}{|E|}\, :\, E\Subset\Omega \mbox{ open set with smooth boundary}\right\}.
\end{equation}
Here the symbol $|\cdot|$ stands for the $N-$dimensional Lebesgue measure, while $\mathcal{H}^{N-1}$ is the $(N-1)-$dimensional Hausdorff measure.
This constant and the minimization problem that goes with it have attracted an increasing interest along the years. We cite for example \cite{BoBo, BoPa, BuCaCo, CC, CCN, Erco, KF, KLR, LeNeSa, LePr} and \cite{Pa} for some studies on this topic.
Without any attempt of describing here the main known results or listing all the papers connected with this problem, we refer to the survey \cite{Leo} by Leonardi. There, the interested reader will find some motivations and some auxiliary problems leading to the study of this constant, together with an account of the main achievements on the minimization problem. 
\par
Apart for being interesting in itself, Cheeger's constant plays a particularly intriguing role
in Spectral Geometry. This was the original motivation to introduce it in \cite{cheeger} (see also \cite{Bu1}).
In order to explain this point, let us define the following sharp Poincar\'e constant
\[
\lambda(\Omega)=\inf_{u\in C^\infty_0(\Omega)} \left\{\int_\Omega |\nabla u|^2\,dx\, :\, \int_\Omega |u|^2\,dx=1\right\}.
\]
Whenever the space $W^{1,2}_0(\Omega)$ is compactly embedded in $L^2(\Omega)$, we know that this quantity gives the first eigenvalue of the Dirichlet-Laplacian on $\Omega$. More generally,
this coincides with the bottom of the spectrum of the Dirichlet-Laplacian on $\Omega$ (see for example \cite[Chapter 10, Section 1.1]{BiSo}).
Then, we have the following celebrated lower bound
\begin{equation}
\label{cheegerineq}
\left(\frac{h_1(\Omega)}{2}\right)^2\le \lambda(\Omega),
\end{equation}
usually called {\it Cheeger's inequality}, see for example \cite[Chapter 4, Section 2]{Maz} for a proof.
\par 
It is quite remarkable that \eqref{cheegerineq} gives a lower bound on the spectrum of the Dirichlet-Laplacian on an open set, which holds without any assumption on the open set and with a constant having an intrinsic geometric content. These features already explains quite neatly the interest gained by the Cheeger constant along the years.

\subsection{A variation on the theme}

From the mathematical point of view, in principle there is no reason to confine ourselves to consider the ratio ``perimeter/volume'' in the definition of $h_1$. One could for example consider different powers of the volume of the admissible sets.
In other words, for an exponent $0<q<N/(N-1)$, we could consider the following {\it generalized Cheeger constant}
\[
h_q(\Omega)=\inf\left\{\frac{\mathcal{H}^{N-1}(\partial E)}{|E|^\frac{1}{q}}\, :\, E\Subset\Omega \mbox{ open set with smooth boundary}\right\},
\]
associated to every open set $\Omega\subseteq\mathbb{R}^N$.
In the case $q=1$, we are back with the familiar Cheeger constant \eqref{cheeger}. We observe that the restrictions on the exponent $q$ are those making the constant $h_q$ non-trivial (see Remark \ref{rem:restrizioni} below).
\par
As one may expect, we have not been the first ones to consider this kind of generalization. We cite for example \cite{Av, BBF, FiMP} and \cite{FuMP}, where these constants have been considered. More recently, a systematic study of these constants has been started in the papers \cite{CS} and \cite{PS}.
\par
Nevertheless, it seems that many interesting questions on $h_q$ are still not fully understood. We wish to list some of them, here below: the present paper will then focus on answering (at least partially) some of them. 
\vskip.2cm\noindent
A first question which may arise is the following one: 
\begin{equation}
\label{Q1}
\mbox{\it can one still prove ``universal'' lower bounds like \eqref{cheegerineq}, with $h_q$ in place of $h_1$?}
\tag{\bf Q1}
\end{equation}
Another natural question, which is somehow connected to the previous one, reads as follows:
\begin{equation}
\label{Q2}
\mbox{\it is it possible to compare $h_1(\Omega)$ and $h_q(\Omega)$ with $q\not=1$?}
\tag{\bf Q2}
\end{equation}

Finally, let us consider one more question, which appears quite reasonable and interesting: this concerns the relation of $h_q(\Omega)$ with sharp functional inequalities.
In order to formulate the question, we recall that $h_1(\Omega)$ coincides with the following sharp Poincar\'e inequality
\[
\lambda_{1,1}(\Omega)=\inf_{u\in C^\infty_0(\Omega)} \left\{\int_\Omega |\nabla u|\,dx\, :\, \int_\Omega |u|\,dx=1\right\},
\]
see for example \cite[Theorem 2.1.3]{Maz}. Then, by introducing the more general quantity
\[
\lambda_{1,q}(\Omega)=\inf_{u\in C^\infty_0(\Omega)} \left\{\int_\Omega |\nabla u|\,dx\, :\, \int_\Omega |u|^q\,dx=1\right\},
\]
one could ask the following question:
\begin{equation}
\mbox{\it is it true that $\lambda_{1,q}(\Omega)=h_q(\Omega)$, for every $0<q<\dfrac{N}{N-1}$?}
\tag{\bf Q3}
\label{Q3}
\end{equation}
Actually, for $q>1$ we already know that the answer is {\it yes}, still by\footnote{In order to help the reader, we point out that one should use the result of \cite{Maz} with the following choices
\[
\mu(g)=|g|,\qquad \sigma(\partial g)=\mathcal{H}^{N-1}(\partial g),\qquad \beta=0,\qquad \Phi(x,\nabla u)=|\nabla u|,
\]
and $g\Subset\Omega$ has smooth boundary.} \cite[Theorem 2.1.3]{Maz}. One may wonder what happens for $0<q<1$.
\vskip.2cm\noindent
The previous questions were more focused on the constant $h_q(\Omega)$ itself. One could try to have a closer look at the minimization problem which defines it and study for example: the behaviour of minimizing sequences, existence of a (possibly relaxed) solution, its regularity and so on. Some interesting results of this type have been obtained by Pratelli and Saracco in \cite{PS}, we will comment below on these results.
\par
Let us first make a couple of preliminary observations:
\begin{enumerate}
\item if $E_1,E_2\Subset \Omega$ are disjoint open subsets with smooth boundary, then we have
\[
\mathcal{H}^{N-1}(\partial (E_1\cup E_2))=\mathcal{H}^{N-1}(\partial E_1)+\mathcal{H}^{N-1}(\partial E_2),
\]
while
\[
\left|E_1\cup E_2\right|^\frac{1}{q} \left\{\begin{array}{ll}
>|E_1|^\frac{1}{q}+|E_2|^\frac{1}{q}, & \mbox{ if } 0<q<1,\\
&\\
=|E_1|+|E_2|,& \mbox{ if } q=1,\\
&\\
<|E_1|^\frac{1}{q}+|E_2|^\frac{1}{q},& \mbox{ if } q>1.
\end{array}
\right.
\]
This in particular implies that for $q>1$, as observed in \cite{PS}, we have
\[
\begin{split}
\frac{\mathcal{H}^{N-1}(\partial (E_1\cup E_2))}{\left|E_1\cup E_2\right|^\frac{1}{q}}&>\frac{\mathcal{H}^{N-1}(\partial E_1)+\mathcal{H}^{N-1}(\partial E_2)}{|E_1|^\frac{1}{q}+|E_2|^\frac{1}{q}}\\
&\ge \min\left\{\frac{\mathcal{H}^{N-1}(\partial E_1)}{|E_1|^\frac{1}{q}},\frac{\mathcal{H}^{N-1}(\partial E_2)}{|E_2|^\frac{1}{q}}\right\}.
\end{split}
\]
Thus, in the minimization process for $q>1$, sets prefer not to break into multiple pieces. The previous argument obviously breaks down for $q<1$;
\vskip.2cm
\item for a {\it cylindrical set}, i.e. for a set of the form $E=\omega\times(0,L)$ for some $\omega\subseteq\mathbb{R}^{N-1}$ open and bounded, we have 
\[
\dfrac{\mathcal{H}^{N-1}(\partial E)}{|E|^\frac{1}{q}}\sim L^{1-\frac{1}{q}}\to \left\{\begin{array}{ll}
0, & \mbox{ if } 0<q<1,\\
1, & \mbox{ if } q=1,\\
+\infty,& \mbox{ if } q>1.
\end{array}
\right.
\]
Thus, in particular, if the open set $\Omega$ is unbounded in some direction and contains a ``tube'', minimizing sequences for $h_q(\Omega)$ may have the interest to ``stretch'' as much as possible in the case $q<1$, eventually preventing the existence of an optimal set. On the contrary, in light of the previous computation, for $q>1$ minimizing sequences should rather stay uniformly bounded, thus gaining some form of compactness.  
\end{enumerate}
These rough and clumsy observations may suggest that some new phenomena could appear in the minimization problem and that the two regimes $0<q<1$ and $q>1$ should give rise to qualitatively different ``results'', at least for {\it disconnected sets} and/or {\it very elongated sets}. Not surprisingly, we will see that this is exactly what happens.

\subsection{Main results}
After this lengthy presentation of the aims and scopes of the paper, we list below the main achievements of our discussion on the generalized Cheeger constant $h_q$. They are essentially of three different types:
\begin{enumerate}
\item {\bf Comparison of Cheeger's constants}: here we answer the question \eqref{Q2}. For every $1<q<N/(N-1)$ and every $\Omega\subseteq\mathbb{R}^N$ open set, we have
\begin{equation}
\label{comparison1}
C_1\,\Big(h_1(\Omega)\Big)^{\frac{N}{q}-(N-1)}\le h_q(\Omega)\le C_2\,\Big(h_1(\Omega)\Big)^{\frac{N}{q}-(N-1)},
\end{equation}
for two constants $C_1,C_2>0$ {\it depending on $N$ and $q$, only} (see Theorem \ref{thm:equivalenti}). On the other hand, for every $0<q<1$ and and every $\Omega\subseteq\mathbb{R}^N$  open set, we have 
\begin{equation}
\label{comparison2}
h_q(\Omega)\le C_3\,\Big(h_1(\Omega)\Big)^{\frac{N}{q}-(N-1)},
\end{equation}
for a constant $C_3>0$ depending on $N$ and $q$, only. Moreover, the estimate can not be reverted: indeed, we have
\[
h_1(\mathbb{R}^{N-1}\times(-1,1))>0\qquad \mbox{ and }\qquad h_q(\mathbb{R}^{N-1}\times(-1,1))=0,
\]
see Proposition \ref{prop:subbo}. 
\par
Observe that by combining \eqref{cheegerineq} and \eqref{comparison1} or \eqref{comparison2}, we can answer the question \eqref{Q1}, as well. Indeed, we obtain the following {\it generalized Cheeger inequality}
\begin{equation}
\label{generalcheeger}
\frac{1}{C}\,\Big(h_q(\Omega)\Big)^\frac{2}{\frac{N}{q}-(N-1)}\le \lambda(\Omega),
\end{equation}
with a constant $C>0$ depending on $N$ and $q$, only. This holds for every open set $\Omega\subseteq\mathbb{R}^N$ and every $0<q<N/(N-1)$.
\vskip.2cm
\item {\bf Poincar\'e constants}: for every $1<q<N/(N-1)$, the constant $h_q(\Omega)$ always coincides with a sharp Poincar\'e-Sobolev constant, i.e. 
\[
h_q(\Omega)=\lambda_{1,q}(\Omega):=\inf_{u\in C^\infty_0(\Omega)} \left\{\int_\Omega |\nabla u|\,dx\, :\, \int_\Omega |u|^q\,dx=1\right\}.
\]
On the other hand, for every $0<q<1$, in general we only have 
\[
h_q(\Omega)\ge \lambda_{1,q}(\Omega),
\]
see Lemma \ref{lemma:angeo}
and the equality breaks down for some particular shapes, see Example \ref{exa:counter}. This gives an answer to \eqref{Q3};
\vskip.2cm
\item {\bf Existence of extremals for convex sets}: in dimension $N=2$ and for every $1<q<2$, if $\Omega\subseteq\mathbb{R}^2$ is an open {\it convex} set (not necessarily bounded), then 
\[
h_q(\Omega)=\inf\left\{\frac{\mathcal{H}^{1}(\partial E)}{|E|^\frac{1}{q}}\, :\, E\subseteq \Omega \mbox{ open bounded {\it convex} set}\right\},
\]
a fact already observed in \cite{PS}. Here we prove that the infimum is attained {\it if and only if} the high ridge set of $\Omega$ is not empty, see Theorem \ref{thm:existence_convex}. We recall that the high ridge set is the following subset
\[
\mathcal{M}(\Omega):=\Big\{x\in\Omega\, :\, B_{r_\Omega}(x)\subseteq\Omega\Big\},
\] 
where $r_\Omega$ is the {\it inradius} of $\Omega$.
\par
On the other hand, for every $0<q<1$, for unbounded convex sets we always have $h_q(\Omega)=0$ and thus existence of a minimizer fails, see Lemma \ref{lemma:appu}. The borderline case $q=1$ is particular: here, it may happen that $h_1(\Omega)>0$ but the infimum is not attained, as for $\Omega=\mathbb{R}\times(-1,1)$ (see for example \cite[Theorm 3.1]{KP}).
\end{enumerate}
Some remarks are in order, for each of the previous results. We also list some open questions, that we think are quite interesting.
\begin{remark}
 We notice that the generalized Cheeger inequality \eqref{generalcheeger} improves that of \cite[Theorem 3.1]{Av}. While the latter depends on the volume term $|\Omega|$, the estimate \eqref{generalcheeger} holds for general open sets, even having infinite volume. As for estimates \eqref{comparison1} and \eqref{comparison2}, we point out that the constants $C_1$ and $C_3$ are explicit and sharp. On the contrary, the constant $C_2$ is explicit, but determining the value of the sharp constant is an intriguing open problem. 
\end{remark}

\begin{remark}
As we said above, the fact that $\lambda_{1,q}(\Omega)=h_q(\Omega)$ for $1<q<N/(N-1)$ was already known. On this point, the main new fact is the counter-example showing that in general we can not have the same result, for $0<q<1$. The counter-example crucially exploits the fact that for disconnected sets ``something strange'' could happen, as previously exposed. In light of this fact, one could (very) bravely guess that for convex sets, we could still have the equivalence between $\lambda_{1,q}$ and $h_q$, even for $0<q<1$. This is an aspect that deserves to be investigated in the future.
\end{remark}

\begin{remark}
We point out that our existence theorem for $1<q<2$ partially superposes with \cite[Theorem 4.2]{PS}. On the one hand, the latter is more general, since it deals with a class of open planar sets, not necessarily convex; on the other hand, it is more restrictive, since the class considered is that of ``strips'', i.e. neighborhoods of regular curves (possibly infinite). 
\par
Our existence proof is different from that of \cite{PS} and it is based on a ``four terms'' geometric inequality for convex sets, linking diameter, inradius, perimeter and area. This is taken from our previous paper \cite{Bra}. We point out that this argument, needed is order to infer compactness of minimizing sequences of convex sets, would work verbatim in every dimension $N\ge 2$. The restriction to the case of $N=2$ is needed in order to assure that we can restrict to convex subsets, without affecting the infimum $h_q(\Omega)$. As observed in \cite{PS}, in dimension $N=2$ this can be obtained by using that the convex hull of a connected set decreases the perimeter, while enlarging its area. In higher dimensions this fails to be true and thus more sophisticated arguments would be needed, maybe inspired to those used for the case $q=1$ (see for example \cite{ACC}). This is certainly an interesting point, that we leave for future research, as well.
\end{remark}

\subsection{Plan of the paper}
The paper consists of five sections, plus two appendices. In Section \ref{sec:2} we recall some basic facts about $h_q$ and the Poincar\'e-Sobolev constants $\lambda_{1,q}$. Section \ref{sec:3} is devoted to briefly discuss the case of disconnected sets. We discuss the equivalence of generalized Cheeger's constants in Section \ref{sec:4}. Finally, in Section \ref{sec:5} we present the case of convex planar sets. Appendix \ref{sec:A} contains a classical local $L^\infty-L^q$ estimate for the so-called {\it $p-$torsion function}, while Appendix \ref{sec:B} records a simple approximation results for convex sets.

\begin{ack}
It is a pleasure to acknowledge some conversations with Giorgio Saracco, who also pointed out the references \cite{ACC} and \cite{Av}. We thank Nicola Fusco and Paolo Salani, for their insights on a couple of points of this paper.
Remark \ref{rem:simon} comes from a conversation with Simon Larson at the Institute Mittag-Leffler, in September 2022: we wish to thank him. Finally, some of the contents of this paper have been presented during a seminar in Bielefeld in January 2024: we wish to thank Anna Balci and Lars Diening for their kind invitation and the friendly atmosphere provided during the staying.
\end{ack}

\section{Preliminaries}
\label{sec:2}

We indicate by $B_R(x_0)$ the $N-$dimensional open ball centered at $x_0\in\mathbb{R}^N$, having radius $R>0$. For balls centered at the origin, we will simply write $B_R$. By $\omega_N$ we mean the volume of $B_1$. Occasionally, we will also need to work with cubes: we will use the symbol
\[
Q_R(x_0)=\prod_{i=1}^N (x_0^i-R,x_0^i+R),\qquad \mbox{ with } x_0=(x_0^1,\dots,x_0^N).
\]
Here as well, we will simply write $Q_R$ when the center $x_0$ is the origin.
\par
For an open set $\Omega\subseteq\mathbb{R}^N$ and a pair of exponents $1<p<\infty$, $0<q<\infty$, we will use the following notation
\[
\lambda_{p,q}(\Omega)=\inf_{u\in C^\infty_0(\Omega)} \left\{\int_\Omega |\nabla u|^p\,dx\, :\, \int_\Omega |u|^q\,dx=1\right\}.
\]
As exposed above, in this paper we want to consider the following geometric constant
\[
h_q(\Omega)=\inf\left\{\frac{\mathcal{H}^{N-1}(\partial E)}{|E|^\frac{1}{q}}\, :\, E\Subset\Omega \mbox{ open set with smooth boundary}\right\},
\]
for every $0<q<N/(N-1)$.
We notice that if $\Omega=B_R(x_0)$, by the Isoperimetric Inequality we get that
\begin{equation}
\label{palle}
h_q(B_R(x_0))=\frac{\mathcal{H}^{N-1}(\partial B_R(x_0))}{|B_R(x_0)|^\frac{1}{q}}=N\,\omega_N^{1-\frac{1}{q}}\,R^{N-1-\frac{N}{q}},
\end{equation}
as already observed in \cite[Section 2]{PS}. We now briefly explain the restrictions on $q$ (see also \cite[Remark 2.1]{PS}).
\begin{remark}[Limit cases]
\label{rem:restrizioni}
For $N\ge 2$, in the case $q=N/(N-1)$ the constant $h_q(\Omega)$ is not interesting. Indeed, it does not depend on $\Omega$ and it simply coincides with
\[
h_\frac{N}{N-1}(\Omega)=N\,\omega_N^\frac{1}{N},
\]
i.e. this is the sharp Euclidean isoperimetric constant. 
\par
Analogously, for $q>N/(N-1)$ the constant $h_q(\Omega)$ is not interesting, as well: in this case, taken $B_{r_0}(x_0)\subseteq \Omega$, we have $B_{r}(x_0)\Subset \Omega$ for every $0<r<r_0$ and thus by \eqref{palle}
\[
h_q(\Omega)\le \lim_{r\searrow 0}\frac{\mathcal{H}^{N-1}(\partial B_r(x_0))}{|B_r(x_0)|^\frac{1}{q}}=N\,\omega_N^{1-\frac{1}{q}}\,\lim_{r\searrow 0}r^{N-1-\frac{N}{q}}=0.
\]
Finally, in the case $N=1$, we could allow $q$ to take the limit value $q=\infty$, with the understanding that $1/q=0$ and $|E|^0=1$. In this case, for every non-empty open set $\Omega\subseteq\mathbb{R}$ we have 
\[
h_\infty(\Omega)=\inf\left\{\mathcal{H}^0(\partial E)\, :\, E\Subset\Omega \mbox{ open set with smooth boundary}\right\}=2.
\]
The infimum is attained by any interval $(a,b)\subseteq\Omega$.
\end{remark}
We start with a standard result, showing the relation between Cheeger constants and principal frequencies. As recalled in in the Introduction, this is well-known: however, we reproduce the proof for the reader's convenience. This will also permit us to highlight a first difference between the two regimes $q<1$ and $q>1$.
\begin{lemma}
\label{lemma:angeo}
Let $\Omega\subseteq\mathbb{R}^N$ be an open set and $0< q<N(N-1)$. Then we have
\[
h_q(\Omega)\ge \lambda_{1,q}(\Omega).
\]
Moreover, if $1\le q<N/(N-1)$ the two quantities coincide.
\end{lemma}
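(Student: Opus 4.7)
The plan is to establish the two inequalities separately, using the two standard ways to move between the geometric quantity $h_q$ and the analytic quantity $\lambda_{1,q}$: approximation of characteristic functions in one direction, and the coarea formula in the other.

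For the bound $h_q(\Omega)\ge \lambda_{1,q}(\Omega)$, which I expect to be valid for every admissible $q$, I would fix a smooth open set $E\Subset\Omega$ and construct an admissible sequence for $\lambda_{1,q}$ by smoothly approximating $\chi_E$ from the inside. Concretely, letting $E_\varepsilon=\{x\in E:\mathrm{dist}(x,\partial E)>\varepsilon\}$ and $u_\varepsilon=\chi_{E_\varepsilon}\ast\rho_{\varepsilon/2}$ for a standard mollifier $\rho$, one has $u_\varepsilon\in C^\infty_0(\Omega)$ and $0\le u_\varepsilon\le 1$. Dominated convergence gives $\int_\Omega u_\varepsilon^q\,dx\to|E|$, while the smoothness of $\partial E$ together with BV-convergence of mollified characteristic functions (the classical Meyers--Serrin type result) yields $\int_\Omega|\nabla u_\varepsilon|\,dx\to\mathcal{H}^{N-1}(\partial E)$. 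Plugging $u_\varepsilon$ into the definition of $\lambda_{1,q}(\Omega)$ and letting $\varepsilon\to 0$ produces
\[
\lambda_{1,q}(\Omega)\le\frac{\mathcal{H}^{N-1}(\partial E)}{|E|^{1/q}},
\]
and taking the infimum over $E$ gives the desired bound.

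For the opposite inequality when $q\ge 1$, I would argue by the coarea formula. Given $u\in C^\infty_0(\Omega)$, one can assume $u\ge 0$ up to replacing $u$ with $|u|$ (which is Lipschitz and compactly supported in $\Omega$, hence a legitimate competitor by density). By Sard's theorem, for almost every $t>0$ the superlevel set $\{u>t\}$ is compactly contained in $\Omega$ and has smooth boundary, so the definition of $h_q(\Omega)$ yields
\[
\mathcal{H}^{N-1}(\{u=t\})\ge h_q(\Omega)\,|\{u>t\}|^{1/q}.
\]
Integrating in $t$ and applying the coarea formula on the left gives
\[
\int_\Omega|\nabla u|\,dx\ge h_q(\Omega)\int_0^\infty |\{u>t\}|^{1/q}\,dt.
\]
Since $u(x)=\int_0^\infty \chi_{\{u>t\}}(x)\,dt$ and $q\ge 1$, Minkowski's integral inequality gives
\[
\Big(\int_\Omega u^q\,dx\Big)^{1/q}=\Big\|\int_0^\infty\chi_{\{u>t\}}\,dt\Big\|_{L^q}\le \int_0^\infty |\{u>t\}|^{1/q}\,dt,
\]
and combining the two displays yields $\int_\Omega|\nabla u|\,dx\ge h_q(\Omega)\,\|u\|_{L^q(\Omega)}$, hence $\lambda_{1,q}(\Omega)\ge h_q(\Omega)$.

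The only delicate step is the last one: Minkowski's integral inequality is the point where the hypothesis $q\ge 1$ is used in an essential way. For $0<q<1$ the inequality reverses, so this chain of implications cannot be closed; this is perfectly consistent with the counterexample to equality announced in the Introduction (see Example~\ref{exa:counter} below).
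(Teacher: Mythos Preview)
Your proof is correct and follows essentially the same approach as the paper: approximation of $\chi_E$ by smooth compactly supported functions for the bound $h_q\ge\lambda_{1,q}$, and the coarea formula together with the $L^q$--Lorentz inequality (which you phrase as Minkowski's integral inequality, the paper as a lemma from Maz'ya) for the reverse bound when $q\ge 1$. The only cosmetic difference is that the paper mollifies $1_E$ directly rather than first passing to an inner parallel set $E_\varepsilon$.
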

\begin{proof}
For every $E\Subset \Omega$ with smooth boundary, there exists a sequence $\{\varphi_n\}_{n\in\mathbb{N}}\subseteq C^\infty_0(\Omega)$ such that
\begin{equation}
\label{approx}
\lim_{n\to\infty} \|\varphi_n-1_E\|_{L^q(\Omega)}=0 \qquad \mbox{ and }\qquad \lim_{n\to\infty} \|\nabla \varphi_n\|_{L^1(\Omega)}=|\nabla 1_E|(\mathbb{R}^N)=\mathcal{H}^{N-1}(\partial E).
\end{equation}
Here $1_E$ is the characteristic function of $E$.
Such a sequence can be constructed by means of standard convolution methods: we set 
\[
\varphi_n=1_E\ast \varrho_n,
\]
where 
$\{\varrho_n\}_{n\ge 1}$ is the usual family of standard mollifiers.
By observing that $\varrho_n\in C^\infty_0(B_{2/n})$ and that $E\Subset \Omega$, we get
\[
\varphi_n\in C^\infty_0(\Omega),\qquad \mbox{ for $n$ large enough}.
\]
The first property in \eqref{approx} easily follows from the properties of convolutions, while by \cite[page 121]{AFP} we have 
\[
\lim_{n\to\infty}\|\nabla \varphi_n\|_{L^1(\Omega)}=\lim_{n\to\infty} |\nabla \varphi_n|(\mathbb{R}^N)=|\nabla 1_E|(\mathbb{R}^N),
\]
and the last term coincides with $\mathcal{H}^{N-1}(\partial E)$ by \cite[Proposition 3.62]{AFP}.
We thus get from \eqref{approx}
\[
\frac{\mathcal{H}^{N-1}(\partial E)}{|E|^\frac{1}{q}}=\lim_{n\to\infty} \frac{\displaystyle\int_\Omega |\nabla \varphi_n|\,dx}{\displaystyle\left(\int_\Omega |\varphi_n|^q\,dx\right)^\frac{1}{q}}\ge \lambda_{1,q}(\Omega).
\]
By arbitrariness of $E$, we get the inequality $h_q(\Omega)\ge \lambda_{1,q}(\Omega)$.
\vskip.2cm\noindent
We now assume that $1\le q<N/N(N-1)$ and prove the converse inequality. Let $\varphi\in C^\infty_0(\Omega)\setminus\{0\}$, by using the Coarea Formula, Sard's Theorem and the definition of $h_q(\Omega)$, we get
\[
\int_\Omega |\nabla \varphi|\,dx=\int_0^{+\infty} \mathcal{H}^{N-1}\left(\Big\{x\in\Omega\,:\,|\varphi(x)|=t\Big\}\right)\,dt\ge h_q(\Omega)\,\int_0^{+\infty} \left|\Big\{x\in\Omega\,:\,|\varphi(x)|>t\Big\}\right|^\frac{1}{q}\,dt.
\]
By using Cavalieri's principle and \cite[Section 1.3.5, Lemma 1]{Maz}, we get\footnote{This passage crucially exploits the fact that $q\ge 1$. For $0<q<1$ the inequality is reverted. This inequality can be rephrased by saying that the we have the following continuous embedding $L^{q,1}(\Omega)\hookrightarrow L^{q,q}(\Omega)=L^q(\Omega)$ between {\it Lorentz spaces}. This holds only for $q\ge 1$. For $0<q<1$, the situation is reverted and the Lorentz space $L^{q,1}(\Omega)$ is actually larger than $L^q(\Omega)$.}
\[
\left(\int_\Omega |\varphi|^q\,dx\right)^\frac{1}{q}=\left(q\,\int_0^{+\infty} t^{q-1}\,\left|\Big\{x\in\Omega\,:\,|\varphi(x)|>t\Big\}\right|\,dt\right)^\frac{1}{q}\le \int_0^{+\infty} \left|\Big\{x\in\Omega\,:\,|\varphi(x)|>t\Big\}\right|^\frac{1}{q}\,dt.
\]
The two estimates above prove that 
\[
\frac{\displaystyle\int_\Omega |\nabla \varphi|\,dx}{\displaystyle\left(\int_\Omega |\varphi|^q\,dx\right)^\frac{1}{q}}\ge h_q(\Omega).
\]
By arbitrariness of $\varphi$, this gives that $\lambda_{1,q}(\Omega)\ge h_q(\Omega)$, as well.
\end{proof}
\begin{remark}[The case $0<q<1$]
We observe that for $0<q<1$ the previous proof does not permit to infer that
\[
\lambda_{1,q}(\Omega)\ge h_q(\Omega).
\]
There is a good reason for this fact: actually, in general the two quantities {\it do not coincide}. We refer to Example \ref{exa:counter} for a counter-example.
\end{remark}
The next result is quite classical, as well. However, we try to keep the assumptions on the open sets at a minimal level.
\begin{lemma}
\label{lemma:limitescemo}
Let $\Omega\subseteq\mathbb{R}^N$ be an open set. For every $0< q<N/(N-1)$, we have 
\[
\limsup_{p\searrow 1} \lambda_{p,q}(\Omega)\le \lambda_{1,q}(\Omega).
\]
Moreover, if $\Omega$ has finite volume, then we also have
\begin{equation}
\label{liminf}
\liminf_{p\searrow 1} \lambda_{p,q}(\Omega)\ge \lambda_{1,q}(\Omega).
\end{equation}
\end{lemma}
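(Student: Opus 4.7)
For the upper bound on the $\limsup$, the natural strategy is to use test functions directly: for each admissible $\varphi\in C^\infty_0(\Omega)$ in the definition of $\lambda_{1,q}(\Omega)$, i.e. with $\int_\Omega|\varphi|^q\,dx=1$, the same $\varphi$ is admissible for $\lambda_{p,q}(\Omega)$ for every $p>1$, so
\[
\lambda_{p,q}(\Omega)\le \int_\Omega |\nabla \varphi|^p\,dx.
\]
Since $\varphi$ has compact support and $|\nabla\varphi|$ is bounded, $|\nabla\varphi|^p$ is dominated by $\max(\|\nabla\varphi\|_\infty,1)\cdot 1_{\mathrm{supp}\,\varphi}$ for $p\in(1,2)$, so by dominated convergence the right-hand side tends to $\int_\Omega |\nabla\varphi|\,dx$ as $p\searrow1$. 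Hence $\limsup_{p\searrow 1}\lambda_{p,q}(\Omega)\le \int_\Omega|\nabla\varphi|\,dx$, and taking the infimum over $\varphi$ gives the first inequality.

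For the $\liminf$ under the finite-volume assumption, I would use Hölder's inequality to turn a near-minimizer for $\lambda_{p,q}(\Omega)$ into a competitor for $\lambda_{1,q}(\Omega)$. Fix $\varepsilon>0$ and, for each $p>1$ close to $1$, pick $u_p\in C^\infty_0(\Omega)$ with $\int_\Omega |u_p|^q\,dx=1$ and $\int_\Omega |\nabla u_p|^p\,dx\le \lambda_{p,q}(\Omega)+\varepsilon$. Hölder (exploiting $|\Omega|<\infty$) yields
\[
\int_\Omega |\nabla u_p|\,dx\le \Bigl(\int_\Omega |\nabla u_p|^p\,dx\Bigr)^{1/p}\,|\Omega|^{1-1/p}\le \bigl(\lambda_{p,q}(\Omega)+\varepsilon\bigr)^{1/p}\,|\Omega|^{1-1/p}.
\]
Since $u_p$ is admissible for $\lambda_{1,q}(\Omega)$, the left-hand side bounds $\lambda_{1,q}(\Omega)$ from above. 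The first part of the statement, already established, guarantees that $\lambda_{p,q}(\Omega)$ stays bounded as $p\searrow 1$; combined with $|\Omega|^{1-1/p}\to 1$ and the elementary fact that $a^{1/p}\to a$ uniformly on bounded subsets of $[0,+\infty)$, passing to the $\liminf$ gives $\lambda_{1,q}(\Omega)\le \liminf_{p\searrow 1}\lambda_{p,q}(\Omega)+\varepsilon$. Letting $\varepsilon\to 0$ concludes the proof.

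The only delicate point is the justification of the interplay between $(\lambda_{p,q}(\Omega)+\varepsilon)^{1/p}$ and the $\liminf$: this is where the $\limsup$ step is genuinely needed, as it supplies the uniform bound on $\lambda_{p,q}(\Omega)$ without which the exponent $1/p$ could create trouble. Notice also that finite volume is used only in the second part and really is essential: on a set like $\mathbb R^{N-1}\times(-1,1)$ one cannot hope to convert an $L^p$ bound on the gradient into an $L^1$ bound, which is consistent with the fact that the finite-volume hypothesis was explicitly mentioned in the statement.
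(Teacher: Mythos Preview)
Your proof is correct and follows essentially the same approach as the paper: both the $\limsup$ part (fix a test function and let $p\searrow 1$) and the $\liminf$ part (use H\"older's inequality with $|\Omega|<+\infty$ to compare $\|\nabla\varphi\|_{L^1}$ and $\|\nabla\varphi\|_{L^p}$) are identical in spirit. The only cosmetic difference is that the paper works directly with the inequality $\lambda_{1,q}(\Omega)\le |\Omega|^{1-1/p}\bigl(\lambda_{p,q}(\Omega)\bigr)^{1/p}$, obtained by taking the infimum over all $\varphi$, rather than introducing $\varepsilon$--near-minimizers; this avoids the final $\varepsilon\to 0$ step but is otherwise equivalent.
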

\begin{proof}
For every $\varphi\in C^\infty_0(\Omega)$ not identically vanishing, we have 
\[
\limsup_{p\searrow 1} \lambda_{p,q}(\Omega)\le \lim_{p\searrow 1}\frac{\displaystyle\int_\Omega |\nabla \varphi|^p\,dx}{\displaystyle \left(\int_\Omega |\varphi|^q\,dx\right)^\frac{p}{q}}=\frac{\displaystyle\int_\Omega |\nabla \varphi|\,dx}{\displaystyle \left(\int_\Omega |\varphi|^q\,dx\right)^\frac{1}{q}}.
\]
By taking the infimum over $C^\infty_0(\Omega)$, we thus get 
\[
\limsup_{p\searrow 1} \lambda_{p,q}(\Omega)\le \lambda_{1,q}(\Omega).
\]
We now assume that $|\Omega|<+\infty$. To prove the $\liminf$ inequality, we take $\varphi\in C^\infty_0(\Omega)\setminus\{0\}$ and then observe that by H\"older's inequality
\[
\lambda_{1,q}(\Omega)\le \frac{\displaystyle\int_\Omega |\nabla \varphi|\,dx}{\displaystyle \left(\int_\Omega |\varphi|^{q}\,dx\right)^\frac{1}{q}}\le |\Omega|^{1-\frac{1}{p}}\,\frac{\displaystyle\left(\int_\Omega |\nabla \varphi|^p\,dx\right)^\frac{1}{p}}{\displaystyle \left(\int_\Omega |\varphi|^{q}\,dx\right)^\frac{1}{q}}. 
\]
This shows that 
\[
\lambda_{1,q}(\Omega)\le |\Omega|^{1-\frac{1}{p}}\,\Big(\lambda_{p,q}(\Omega)\Big)^\frac{1}{p}.
\]
By taking the $\liminf$ as $p$ goes to $1$, we conclude.
\end{proof}
\begin{remark}
The assumption $|\Omega|<+\infty$ is probably not optimal for \eqref{liminf} to hold, but we may notice that for general open sets this result can not hold. For example, for $q=1$ and 
\[
\Omega=\mathbb{R}^{N-1}\times(-1,1),
\]
we have
\[
\lambda_{p,1}(\Omega)=0,\ \mbox{ for every } 1<p<\infty,\qquad \mbox{ while }\qquad \lambda_{1,1}(\Omega)>0.
\]
Thus, in this case \eqref{liminf} can not hold. We recall that the equality $\lambda_{p,1}(\Omega)=0$ follows from the fact that\footnote{We denote by $\mathcal{D}^{1,p}_0(\Omega)$ the homogeneous Sobolev space obtained as the completion of $C^\infty_0(\Omega)$ with respect to the norm
\[
\varphi\mapsto \|\nabla \varphi\|_{L^p(\Omega)}.
\]
We also recall that the property $\lambda_{p,1}(\Omega)>0$ entails that $\mathcal{D}^{1,p}_0(\Omega)$ coincides with the more familiar $W^{1,p}_0(\Omega)$ (see for example \cite[Proposition 2.4]{BPZ}), the latter being the closure of $C^\infty_0(\Omega)$ in the usual Sobolev space $W^{1,p}(\Omega)$.} 
\[
\lambda_{p,1}(\Omega)>0 \qquad \Longleftrightarrow \qquad \mathcal{D}^{1,p}_0(\Omega)\hookrightarrow L^1(\Omega) \mbox{ is compact},
\]
see \cite[Theorem 15.6.2]{Maz} and also \cite[Theorem 1.2]{BF}. For the set $\Omega=\mathbb{R}^{N-1}\times (-1,1)$, the invariance by translation in the first $N-1$ directions makes such an embedding non-compact. 
\end{remark}

\section{Disconnected sets}
\label{sec:3}

We begin with a very simple result for a particular class of real functions of one variable. The proof is omitted, it is just based on very standard facts.
\begin{lemma}
\label{lemma:ulanbator}
Let $a,b\ge 0$ and $c,d>0$, for $\beta>0$ we define the function
\[
\phi_\beta(t)=\frac{a+t\,b}{(c+t^\beta\,d)^\frac{1}{\beta}},\qquad \mbox{ for every } t>0.
\]
For $\beta\ge 1$, we have 
\[
\phi_\beta(t)>\min\Big\{\phi_\beta(0),\lim_{t\to+\infty} \phi_\beta(t)\Big\}= \min\left\{\frac{a}{c^\frac{1}{\beta}},\frac{b}{d^\frac{1}{\beta}}\right\},\qquad \mbox{ for every } t>0.
\]
For $0<\beta<1$, we have 
\[
\phi_\beta(t)\ge \displaystyle\left(\left(\frac{c^\frac{1}{\beta}}{a}\right)^\frac{\beta}{1-\beta}+\left(\frac{d^\frac{1}{\beta}}{b}\right)^\frac{\beta}{1-\beta}\right)^\frac{\beta-1}{\beta},\qquad \mbox{ for every } t>0,
\]
with equality if and only if
\[
t=t_\beta:=\left(\frac{a}{c}\,\frac{d}{b}\right)^\frac{1}{1-\beta}.
\]
\end{lemma}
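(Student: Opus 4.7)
The plan is to study $\phi_\beta$ directly, first recording that $\phi_\beta(0)=a/c^{1/\beta}$ and $\lim_{t\to+\infty}\phi_\beta(t)=b/d^{1/\beta}$, which identifies the right-hand sides appearing in the claimed bounds. The two regimes $\beta\ge 1$ and $0<\beta<1$ then call for different tools, reflecting the superadditive vs.\ subadditive behaviour of $x\mapsto x^\beta$.

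For $\beta\ge 1$, I would proceed by pure inequalities, with no calculus. Raising $\phi_\beta$ to the $\beta$-th power and applying the superadditivity $(a+tb)^\beta\ge a^\beta+t^\beta b^\beta$ gives
\[
\phi_\beta(t)^\beta\ge\frac{a^\beta+t^\beta b^\beta}{c+t^\beta d}\ge \min\left\{\frac{a^\beta}{c},\,\frac{b^\beta}{d}\right\},
\]
where the second inequality is the elementary ``mediant'' property (verified by distinguishing the two cases $a^\beta d\lessgtr b^\beta c$ and clearing denominators). Taking the $\beta$-th root yields the lower bound $\min\{a/c^{1/\beta},\,b/d^{1/\beta}\}$. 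Strictness at any finite $t>0$ follows from the fact that either $\beta>1$ and the superadditivity is strict whenever both $a$ and $tb$ are positive, or $\beta=1$ and $\phi_1$ is a M\"obius transformation of $t$, strictly monotone between its two limits.

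For $0<\beta<1$ the infimum is attained at an interior critical point, so I would switch to calculus. A direct differentiation yields
\[
\phi_\beta'(t)=(c+t^\beta d)^{-1/\beta-1}\bigl(bc-ad\,t^{\beta-1}\bigr),
\]
and since $t\mapsto t^{\beta-1}$ is strictly decreasing, the bracket vanishes only at $t=t_\beta=(ad/(bc))^{1/(1-\beta)}$, with $\phi_\beta'$ changing sign from negative to positive there; hence $t_\beta$ is the global minimum. Substituting back and using $bc=ad\,t_\beta^{\beta-1}$ to rewrite $a+t_\beta b=(b/d)\,t_\beta^{1-\beta}\bigl(c+t_\beta^\beta d\bigr)$, I obtain
\[
\phi_\beta(t_\beta)=\frac{a}{c}\bigl(c+t_\beta^\beta d\bigr)^{(\beta-1)/\beta}.
\]
Finally, using the explicit value $t_\beta^\beta=(ad/(bc))^{\beta/(1-\beta)}$, one checks that $(c/a)^{\beta/(1-\beta)}\bigl(c+t_\beta^\beta d\bigr)$ equals precisely $(c^{1/\beta}/a)^{\beta/(1-\beta)}+(d^{1/\beta}/b)^{\beta/(1-\beta)}$, which, once raised to the power $(\beta-1)/\beta$, produces the target formula.

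The only genuine obstacle is this last algebraic rearrangement in the $0<\beta<1$ case: after setting $r:=\beta/(1-\beta)$ the exponents become linear, but one still has to pair the two resulting summands correctly against the factor $(c/a)^{r}$ and to invoke the explicit form of $t_\beta$. Everything else---boundary values, superadditivity, differentiation, sign analysis---is routine.
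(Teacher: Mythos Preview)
Your argument is correct; the paper in fact omits the proof of this lemma entirely, describing it as ``based on very standard facts'', so there is no authorial proof to compare against. One minor caveat (already present in the lemma's statement, not a flaw in your reasoning): for $\beta=1$ with $a/c=b/d$ the function $\phi_1$ is constant and the strict inequality degenerates to equality, but this edge case is harmless for the applications made in the paper.
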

The previous result permits us to compute $\lambda_{1,q}$ for disconnected sets. 
\begin{lemma}[Case $q\ge 1$]
\label{lemma:spinformula}
Let $\Omega_1,\Omega_2\subseteq \mathbb{R}^N$ be two open sets, such that $\Omega_1\cap \Omega_2=\emptyset$. Then for $1\le q<N/(N-1)$, we have 
\[
h_q(\Omega_1\cup \Omega_2)=\lambda_{1,q}(\Omega_1\cup \Omega_2)=\min\Big\{\lambda_{1,q}(\Omega_1),\, \lambda_{1,q}(\Omega_2)\Big\}=\min\Big\{h_q(\Omega_1),\, h_q(\Omega_2)\Big\} .
\]
\end{lemma}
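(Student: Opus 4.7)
The statement is a chain of three equalities. The outer two come directly from Lemma~\ref{lemma:angeo} (applied to $\Omega_1\cup\Omega_2$ on one side, and to each $\Omega_i$ on the other), since we are in the regime $1\le q<N/(N-1)$ where $h_q=\lambda_{1,q}$. So the entire substance of the statement is the middle identity, which I would prove directly in the geometric form
\[
h_q(\Omega_1\cup\Omega_2)=\min\Big\{h_q(\Omega_1),\,h_q(\Omega_2)\Big\}.
\]

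The inequality $\le$ is trivial: any smooth set $F\Subset\Omega_i$ is also admissible for $\Omega_1\cup\Omega_2$, which gives $h_q(\Omega_1\cup\Omega_2)\le h_q(\Omega_i)$ for $i=1,2$.

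For the reverse inequality $\ge$, take any smooth $E\Subset\Omega_1\cup\Omega_2$ and split it as $E_i:=E\cap\Omega_i$. Because $\Omega_1$ and $\Omega_2$ are disjoint open sets, the pair $E_1,E_2$ is a disjoint decomposition of $E$ into relatively open pieces of $E$; each is therefore smooth and compactly contained in the corresponding $\Omega_i$. If one of them is empty, $E$ lies entirely in a single $\Omega_i$ and the bound is immediate, so assume both have positive volume. Additivity of perimeter and volume on disjoint pieces gives
\[
\frac{\mathcal{H}^{N-1}(\partial E)}{|E|^{1/q}}=\frac{\mathcal{H}^{N-1}(\partial E_1)+\mathcal{H}^{N-1}(\partial E_2)}{\big(|E_1|+|E_2|\big)^{1/q}}.
\]
Now apply Lemma~\ref{lemma:ulanbator} with $\beta=q\ge 1$, the choices $a=\mathcal{H}^{N-1}(\partial E_1)$, $b=\mathcal{H}^{N-1}(\partial E_2)$, $c=|E_1|$, $d=|E_2|$, and $t=1$: the right-hand side equals $\phi_q(1)$, and the lemma yields
\[
\phi_q(1)\ge\min\left\{\frac{\mathcal{H}^{N-1}(\partial E_1)}{|E_1|^{1/q}},\,\frac{\mathcal{H}^{N-1}(\partial E_2)}{|E_2|^{1/q}}\right\}\ge\min\Big\{h_q(\Omega_1),\,h_q(\Omega_2)\Big\},
\]
the last step since $E_i$ is admissible for $h_q(\Omega_i)$. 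Taking the infimum over $E$ closes the chain.

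The only point that takes a moment of care is the decomposition step, i.e.\ verifying that each $E_i=E\cap\Omega_i$ is again an admissible competitor: this uses crucially that $\Omega_1,\Omega_2$ are disjoint \emph{and open}, so that $E_i$ is both open and closed in $E$, hence inherits smoothness of $\partial E$, and is compactly contained in $\Omega_i$ because $\overline{E_i}\subseteq\overline{E}\cap\overline{\Omega_i}\subseteq(\Omega_1\cup\Omega_2)\cap\overline{\Omega_i}=\Omega_i$ (using disjointness of the open sets). Everything else is a direct application of Lemmas~\ref{lemma:angeo} and~\ref{lemma:ulanbator}; no compactness or regularity argument beyond what is already packaged in those statements is required.
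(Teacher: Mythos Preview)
Your proof is correct and uses the same two ingredients as the paper (Lemma~\ref{lemma:angeo} for the outer equalities, Lemma~\ref{lemma:ulanbator} with $\beta=q\ge 1$ for the core inequality). The only difference is cosmetic: the paper proves the middle identity on the \emph{functional} side, working with $\lambda_{1,q}$ and test functions $\varphi_1+t\,\varphi_2\in C^\infty_0(\Omega_1\cup\Omega_2)$, whereas you work on the \emph{geometric} side with $h_q$ and the splitting $E=E_1\cup E_2$. Both arguments reduce to the same evaluation $\phi_q(1)\ge\min\{a/c^{1/q},\,b/d^{1/q}\}$, and your verification that $E_i\Subset\Omega_i$ with smooth boundary is the geometric counterpart of the paper's observation that the restrictions $\varphi_i$ lie in $C^\infty_0(\Omega_i)$.
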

\begin{proof}
It is sufficient to prove the second equality for $\lambda_{1,q}(\Omega_1\cup \Omega_2)$, the others being a consequence of Lemma \ref{lemma:angeo}.
We take $\varphi_1\in C^\infty_0(\Omega_1)\setminus\{0\}$ and $\varphi_2\in C^\infty_0(\Omega_2)\setminus\{0\}$. Then we have $\varphi_1+t\,\varphi_2\in C^\infty_0(\Omega_1\cup\Omega_2)$, for every $t>0$. Accordingly, we get
\[
\lambda_{1,q}(\Omega_1\cup \Omega_2)\le \frac{\displaystyle\int_{\Omega_1} |\nabla \varphi_1|\,dx+t\,\int_{\Omega_2} |\nabla \varphi_2|\,dx}{\displaystyle\left(\int_{\Omega_1} |\varphi_1|^q\,dx+t^q\,\int_{\Omega_2} |\varphi_2|^q\,dx\right)^\frac{1}{q}}.
\]
We can now minimize with respect to $t>0$: thanks to Lemma \ref{lemma:ulanbator}, for $1\le q<N/(N-1)$ we get
\[
\lambda_{1,q}(\Omega)\le \min\left\{\frac{\displaystyle\int_{\Omega_1} |\nabla \varphi_1|\,dx}{\displaystyle\left(\int_{\Omega_1} |\varphi_1|^q\,dx\right)^\frac{1}{q}},\, \frac{\displaystyle \int_{\Omega_2} |\nabla \varphi_2|\,dx}{\displaystyle\left(\int_{\Omega_2} |\varphi_2|^q\,dx\right)^\frac{1}{q}} \right\}.
\]
By arbitrariness of $\varphi_1$ and $\varphi_2$, we thus get 
\[
\lambda_{1,q}(\Omega_1\cup \Omega_2)\le \min\Big\{\lambda_{1,q}(\Omega_1),\, \lambda_{1,q}(\Omega_2)\Big\}.
\]
In order to prove the reverse inequality, we take $\varphi\in C^\infty_0(\Omega_1\cup \Omega_2)\setminus\{0\}$. 
We call $\varphi_1$ and $\varphi_2$ the restrictions of $\varphi$ to $\Omega_1$ and $\Omega_2$, respectively. Of course, we have $\varphi_i\in C^\infty_0(\Omega_i)$, for $i=1,2$. Let us suppose at first that $\varphi_1\not\equiv 0$ and $\varphi_2\not\equiv 0$. Then, if we set 
\[
\phi_q(t):=\frac{\displaystyle\int_{\Omega_1} |\nabla \varphi_1|\,dx+t\,\int_{\Omega_2} |\nabla \varphi_2|\,dx}{\displaystyle\left(\int_{\Omega_1} |\varphi_1|^q\,dx+t^q\,\int_{\Omega_2} |\varphi_2|^q\,dx\right)^\frac{1}{q}},\qquad \mbox{ for } t>0,
\]
again by Lemma \ref{lemma:ulanbator} we have 
\[
\begin{split}
\frac{\displaystyle\int_{\Omega_1} |\nabla \varphi_1|\,dx+\int_{\Omega_2} |\nabla \varphi_2|\,dx}{\displaystyle\left(\int_{\Omega_1} |\varphi_1|^q\,dx+\int_{\Omega_2} |\varphi_2|^q\,dx\right)^\frac{1}{q}}=\phi_q(1)\ge \inf_{t>0}\phi_q(t)&=\min\left\{\frac{\displaystyle\int_{\Omega_1} |\nabla \varphi_1|\,dx}{\displaystyle\left(\int_{\Omega_1} |\varphi_1|^q\,dx\right)^\frac{1}{q}},\, \frac{\displaystyle \int_{\Omega_2} |\nabla \varphi_2|\,dx}{\displaystyle\left(\int_{\Omega_2} |\varphi_2|^q\,dx\right)^\frac{1}{q}} \right\}\\
&\ge \min\Big\{\lambda_{1,q}(\Omega_1), \lambda_{1,q}(\Omega_2)\Big\}.
\end{split}
\]
On the other hand, it easily seen that the same lower bound holds if $\varphi_1\equiv 0$ or $\varphi_2\equiv 0$, as well. By arbitrariness of $\varphi\in C^\infty_0(\Omega_1\cup \Omega_2)$, we thus get the claimed equality for $\lambda_{1,q}(\Omega)$. 
\end{proof}
In a similar way, we can cover the case $0<q<1$, as well. In particular, we extend to the case $p=1$ and $0<q<1$ a formula obtained in \cite[Corollary 2.4]{BF}. As simple as it is, it will be useful in order to construct some counter-examples.
\begin{lemma}[Case $0<q<1$]
\label{lemma:spinformulab}
Let $\Omega_1,\Omega_2\subseteq \mathbb{R}^N$ be two open sets, such that $\Omega_1\cap \Omega_2=\emptyset$. Then for $0<q<1$: 
\begin{itemize}
\item if $\lambda_{1,q}(\Omega_1)>0 $ and $\lambda_{1,q}(\Omega_2)>0$ we have 
\begin{equation}
\label{spin!spin!}
\lambda_{1,q}(\Omega_1\cup \Omega_2)=\displaystyle\left(\left(\frac{1}{\lambda_{1,q}(\Omega_1)}\right)^\frac{q}{1-q}+\left(\frac{1}{\lambda_{1,q}(\Omega_2)}\right)^\frac{q}{1-q}\right)^\frac{q-1}{q};
\end{equation}
\item if $\lambda_{1,q}(\Omega_1)=0$ or $\lambda_{1,q}(\Omega_2)=0$, then we have 
\[
\lambda_{1,q}(\Omega_1\cup \Omega_2)=0.
\]
\end{itemize}
\end{lemma}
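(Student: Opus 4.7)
The plan is to mirror the proof of Lemma~\ref{lemma:spinformula} line by line, substituting the $0<\beta<1$ regime of Lemma~\ref{lemma:ulanbator} for the $\beta\ge 1$ regime. The second alternative is immediate: since $C^\infty_0(\Omega_i)\subseteq C^\infty_0(\Omega_1\cup\Omega_2)$ for $i=1,2$, monotonicity yields
$$\lambda_{1,q}(\Omega_1\cup\Omega_2)\le \min\{\lambda_{1,q}(\Omega_1),\lambda_{1,q}(\Omega_2)\}=0$$
under the stated hypothesis. The substantive work is the first alternative; I will write $M$ for the right-hand side of \eqref{spin!spin!} and establish matching upper and lower bounds.

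For the upper bound $\lambda_{1,q}(\Omega_1\cup\Omega_2)\le M$, I would pick arbitrary test functions $\varphi_i\in C^\infty_0(\Omega_i)\setminus\{0\}$ and note that the Rayleigh quotient of $\varphi_1+t\,\varphi_2\in C^\infty_0(\Omega_1\cup\Omega_2)$ is precisely the function $\phi_q(t)$ of Lemma~\ref{lemma:ulanbator} with
$$a=\int_{\Omega_1}|\nabla \varphi_1|\,dx,\;\; b=\int_{\Omega_2}|\nabla \varphi_2|\,dx,\;\; c=\int_{\Omega_1}|\varphi_1|^q\,dx,\;\; d=\int_{\Omega_2}|\varphi_2|^q\,dx,\;\; \beta=q.$$
Since $0<q<1$, that lemma identifies $\inf_{t>0}\phi_q(t)=\phi_q(t_q)=\bigl(R_1^{-q/(1-q)}+R_2^{-q/(1-q)}\bigr)^{(q-1)/q}$, where $R_i$ denotes the Rayleigh quotient of $\varphi_i$ on $\Omega_i$. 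Letting $R_i\searrow\lambda_{1,q}(\Omega_i)$ and using that this expression is nondecreasing in each $R_i$ (the exponent $(q-1)/q$ being negative), I recover $\lambda_{1,q}(\Omega_1\cup\Omega_2)\le M$ in the limit.

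For the reverse inequality, I would take any $\varphi\in C^\infty_0(\Omega_1\cup\Omega_2)\setminus\{0\}$ and set $\varphi_i:=\varphi|_{\Omega_i}\in C^\infty_0(\Omega_i)$. When both $\varphi_i\not\equiv 0$, the Rayleigh quotient of $\varphi$ equals $\phi_q(1)\ge \phi_q(t_q)$ by Lemma~\ref{lemma:ulanbator}, and since $R_i\ge \lambda_{1,q}(\Omega_i)$ the same monotonicity running in the opposite direction gives $\phi_q(t_q)\ge M$. When one of the restrictions vanishes identically, say $\varphi_1\equiv 0$, the Rayleigh quotient of $\varphi$ reduces to that of $\varphi_2$ on $\Omega_2$, hence is at least $\lambda_{1,q}(\Omega_2)$; a direct inspection of the formula for $M$ gives $M<\min\{\lambda_{1,q}(\Omega_1),\lambda_{1,q}(\Omega_2)\}$, so this degenerate case is covered as well. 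Infimizing over $\varphi$ then yields $\lambda_{1,q}(\Omega_1\cup\Omega_2)\ge M$. The only step requiring genuine care is the sign-bookkeeping around the negative exponent $(q-1)/q$, which reverses every monotonicity step relative to Lemma~\ref{lemma:spinformula}; beyond that, the argument is a direct transcription.
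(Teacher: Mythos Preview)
Your proof is correct and follows essentially the same approach as the paper for the main identity \eqref{spin!spin!}: both use the $0<\beta<1$ case of Lemma~\ref{lemma:ulanbator} for the upper bound, and both dispose of the degenerate case (one restriction vanishing) via the observation that $M<\min\{\lambda_{1,q}(\Omega_1),\lambda_{1,q}(\Omega_2)\}$. For the second alternative your monotonicity argument is more direct than the paper's, which instead establishes it by applying \eqref{spin!spin!} to the bounded truncations $\Omega_{i,n}=\Omega_i\cap B_n$ and passing to the limit.
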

\begin{proof}
We assume at first that both $\lambda_{1,q}(\Omega_1)$ and $\lambda_{1,q}(\Omega_2)$ are positive. The upper bound
\[
\lambda_{1,q}(\Omega_1\cup \Omega_2)\le \displaystyle\left(\left(\frac{1}{\lambda_{1,q}(\Omega_1)}\right)^\frac{q}{1-q}+\left(\frac{1}{\lambda_{1,q}(\Omega_2)}\right)^\frac{q}{1-q}\right)^\frac{q-1}{q},
\]
can be obtained as in the first part of proof of Lemma \ref{lemma:spinformula}, by using this time Lemma \ref{lemma:ulanbator} for $0<\beta=q<1$. 
\par
In order to prove the reverse inequality, it is now useful to observe that the right-hand side is strictly smaller than both values $\lambda_{1,q}(\Omega_1)$ and $\lambda_{1,q}(\Omega_2)$. This is due to the fact that the function of two real variables
\[
(t,s)\mapsto \left(t^\frac{q}{1-q}+s^\frac{q}{1-q}\right)^\frac{q-1}{q},\qquad \mbox{ for } (t,s)\in (0,+\infty)\times(0,+\infty),
\]
is {\it decreasing} in both variables, thanks to the fact that $q-1<0$. In particular, we have 
\[
\left(t^\frac{q}{1-q}+s^\frac{q}{1-q}\right)^\frac{q-1}{q}<\frac{1}{t}\qquad \mbox{ and }\qquad \left(t^\frac{q}{1-q}+s^\frac{q}{1-q}\right)^\frac{q-1}{q}<\frac{1}{s},
\]
for every $t,s>0$. This very simple observation shows that $\lambda_{1,q}(\Omega_1\cup \Omega_2)$ can be equivalently defined, by restricting the minimization to functions $\varphi\in C^\infty_0(\Omega_1\cup \Omega_2)\setminus\{0\}$ such that both the restriction of $\varphi$ to $\Omega_1$ and that to $\Omega_2$ are not identically vanishing. In light of this fact, we can now run the same argument as in the second part of the proof of Lemma \ref{lemma:spinformula}.
\par
We now suppose that $\lambda_{1,q}(\Omega_2)=0$, for example. We set $\Omega_{i,n}=\Omega_i\cap B_n$ for $i=1,2$, which are not empty for $n$ large enough. By assumption and using that $\{\Omega_{i,n}\}_{n\in\mathbb{N}}$ is an exhaustion of $\Omega_i$, we get
\[
\lim_{n\to\infty} \lambda_{1,q}(\Omega_{i,n})=\lambda_{1,q}(\Omega_i),\qquad \mbox{ for } i=1,2,
\]
and
\[
\lim_{n\to\infty} \lambda_{1,q}(\Omega_{1,n}\cup \Omega_{2,n})=\lambda_{1,q}(\Omega_1\cup \Omega_2).
\]
By observing that $\Omega_{i,n}$ is an open bounded set, we have that $\lambda_{1,q}(\Omega_{i,n})>0$, for every $n$ large enough. We can thus apply formula \eqref{spin!spin!} to $\Omega_{1,n}\cup \Omega_{2,n}$ and pass to the limit, in order to get
\[
\lambda_{1,q}(\Omega_1\cup \Omega_2)=\lim_{n\to\infty} \lambda_{1,q}(\Omega_{1,n}\cup \Omega_{2,n})=\lim_{n\to\infty}\displaystyle\left(\left(\frac{1}{\lambda_{1,q}(\Omega_{1,n})}\right)^\frac{q}{1-q}+\left(\frac{1}{\lambda_{1,q}(\Omega_{2,n})}\right)^\frac{q}{1-q}\right)^\frac{q-1}{q}.
\]
By using that $q-1<0$ and that $\lambda_{1,q}(\Omega_{2,n})$ converges to $0$, we conclude. 
\end{proof}
\begin{remark}
The previous results can be easily iterated. Thus, for an open set of the form 
\[
\Omega=\bigcup_{n\in\mathbb{N}} \Omega_n,\qquad \mbox{  with } \Omega_i\cap \Omega_j=\emptyset \mbox{ for } i\not=j,
\]
we get for $1\le q<N/(N-1)$
\[
\lambda_{1,q}(\Omega)=h_q(\Omega)=\inf\Big\{h_q(\Omega_n)\, :\, n\in\mathbb{N}\Big\} =\inf\Big\{\lambda_{1,q}(\Omega_n)\, :\, n\in\mathbb{N}\Big\},
\]
and for $0<q<1$ 
\[
\lambda_{1,q}(\Omega)=\displaystyle\left(\sum_{n\in \mathbb{N}}\left(\frac{1}{\lambda_{1,q}(\Omega_n)}\right)^\frac{q}{1-q}\right)^\frac{q-1}{q},
\]
provided that $\lambda_{1,q}(\Omega_n)>0$, for every $n\in\mathbb{N}$.
\end{remark}
Thanks to the formula of Lemma \ref{lemma:spinformulab}, we can highlight some weird phenomena of $\lambda_{1,q}$ and $h_q$, for $0<q<1$. In particular, we show that in general {\it it is no more true that} $\lambda_{1,q}=h_q$.
\begin{example}
\label{exa:counter}
In dimension $N= 2$, we take the open set 
\[
\Omega=B_r(x_0)\cup B_R(y_0),\qquad \mbox{ with } 0<r,R\, \mbox{ and }\, |x_0-y_0|>r+R.
\] 
We take the following restriction
\begin{equation}
\label{raggioexa}
\frac{r}{R}<\frac{\sqrt{7}-2}{3}.
\end{equation}
For $q=1/2$, we are going to show that for this set we have
\[
\lambda_{1,\frac{1}{2}}(\Omega)<h_\frac{1}{2}(\Omega).
\]
By definition, we have
\[
h_\frac{1}{2}(\Omega)=\inf\left\{\frac{\mathcal{H}^{1}(\partial E_1)+\mathcal{H}^{1}(\partial E_2)}{\left(|E_1|+|E_2|\right)^2}\, :\, \begin{array}{c}E_1\Subset B_r(x_0) \mbox{ open set with smooth boundary}\\
E_2\Subset B_R(y_0) \mbox{ open set with smooth boundary}\\
E_1\cup E_2\not=\emptyset
\end{array}, 
\right\}.
\]
By using the Isoperimetric Inequality, we immediately see that this is the same as
\[
\begin{split}
h_\frac{1}{2}(\Omega)&=\inf\left\{\frac{\mathcal{H}^{1}(\partial B_{r_1}(x_1))+\mathcal{H}^{1}(\partial B_{r_2}(y_1))}{\left(|B_{r_1}(x_1)|+|B_{r_2}(y_1)|\right)^2}\, :\, \begin{array}{c}B_{r_1}(x_1)\Subset B_r(x_0) \mbox{ ball}\\
B_{r_2}(y_1)\Subset B_R(y_0) \mbox{ ball}
\end{array}, r_1+r_2>0
\right\}\\
&=\frac{2}{\pi}\,\inf\left\{\frac{r_1+r_2}{\left(r_1^2+r_2^2\right)^2}\, :\, r_1+r_2>0,\ r_1<r \mbox{ and } r_2<R\right\}.
\end{split}
\]
In order to determine this infimum, we need to find the infimum of the function
\[
\psi(t,s)=\frac{t+s}{\left(t^2+s^2\right)^2},\qquad \mbox{ for } (t,s)\in\Big([0,r)\times[0,R)\Big)\setminus\{(0,0)\}.
\]
We first observe that $\psi$ has no {\it internal} critical points: indeed, we have
\[
\nabla \psi(t,s)=(0,0)\qquad \Longleftrightarrow \qquad \left\{\begin{array}{ccc}
t^2+s^2&=&4\,(t+s)\,t\\
t^2+s^2&=&4\,(t+s)\,s
\end{array}
\right.
\] 
Since $t+s>0$, the last two equations imply that each critical point should have $t=s$. By spending this information in one of the two equations, we would get
\[
2\,t^2=8\,t^2.
\]
Thus, the only critical point would be $(0,0)$, which however lays on the boundary. We now study the restriction of $\psi$ to the boundary of its domain of definition. We first observe that
\[
\psi(t,0)=\frac{1}{t^3}\ge \frac{1}{r^3}=\psi(r,0),\qquad \mbox{ for } 0<t\le r,
\]
and
\[
\psi(0,s)=\frac{1}{s^3}\ge \frac{1}{R^3}=\psi(0,R),\qquad \mbox{ for } 0<s\le R.
\]
Since by \eqref{raggioexa} we have $r<R$, points of the form $(t,0)$ can not be minimizers of $\psi$. We now study the restriction $\psi(t,R)$: we have
\[
\begin{split}
\frac{d}{dt} \psi(t,R)\ge 0&\qquad \Longleftrightarrow \qquad t^2+R^2\ge 4\,(t+R)\,t\\
&\qquad \Longleftrightarrow \qquad 3\,t^2+4\,R\,t-R^2\le 0\\
&\qquad \Longleftrightarrow \qquad -\frac{2+\sqrt{7}}{3}\, R\le t\le \frac{-2+\sqrt{7}}{3}\, R.
\end{split}
\]
By recalling that $t\in(0,r]$ and the restriction \eqref{raggioexa}, we obtain that the last condition is always satisfied. Thus, the function $t\mapsto \psi(t,R)$ is increasing, which implies that
\[
\psi(0,R)<\psi(t,R)< \psi(r,R),\qquad \mbox{ for } t\in(0,r).
\]
Thus, points of the form $(t,R)$ can not be minimizers of $\psi$. Finally, we need to study the restriction $\psi(r,s)$: by symmetry, we have
\[
\begin{split}
\frac{d}{ds} \psi(r,s)\ge 0 &\qquad \Longleftrightarrow \qquad -\frac{2+\sqrt{7}}{3}\, r\le s\le \frac{-2+\sqrt{7}}{3}\, r.
\end{split}
\]
Thus, on $(0,R)$ we have two intervals of monotonicity: first $\psi(r,s)$ increases, and then it decreases. In particular, we get
\[
\psi(r,s)>\min\{\psi(r,0),\psi(r,R)\},\qquad \mbox{ for } s\in(0,R).
\]
By the previous discussion, the last two values have already been shown not to correspond to the minimum of $\psi$. At last, we get
\[
\psi(t,s)\ge \psi(0,R)=\frac{1}{R^3},
\qquad \mbox{ for } (t,s)\in\Big([0,r)\times[0,R)\Big)\setminus\{(0,0)\},
\]
that is the function $\psi$ is minimal for $t=0$ and $s=R$.
By going back to our Cheeger constant $h_{1/2}(\Omega)$, we found that
\[
h_\frac{1}{2}(\Omega)=\frac{2}{\pi\,R^3}=h_\frac{1}{2}(B_R(y_0)),
\]
where the last identity follows from \eqref{palle} with $N=2$ and $q=1/2$.
By relying on this identity, we can finally prove that $\lambda_{1,1/2}(\Omega)<h_{1/2}(\Omega)$. Indeed, by Lemma \ref{lemma:spinformulab} with $q=1/2$, we have
\[
\begin{split}
\lambda_{1,\frac{1}{2}}(\Omega)&=\displaystyle\left(\frac{1}{\lambda_{1,\frac{1}{2}}(B_{r}(x_0))}+\frac{1}{\lambda_{1,\frac{1}{2}}(B_{R}(y_0))}\right)^{-1}<\lambda_{1,\frac{1}{2}}(B_R(y_0)).
\end{split}
\]
Finally, we can estimate the last term by Lemma \ref{lemma:angeo}, this entails that
\[
\lambda_{1,\frac{1}{2}}(\Omega)<\lambda_{1,\frac{1}{2}}(B_R(y_0))\le h_\frac{1}{2}(B_R(y_0))=h_\frac{1}{2}(\Omega).
\]
This gives the desired conclusion.
\end{example}

\begin{remark}
Though in general $\lambda_{1,q}(\Omega)$ does not coincide with $h_q(\Omega)$ for $0<q<1$, we recall that it is possible to give a characterization of sets for which $\lambda_{1,q}(\Omega)$ is positive, in terms of isoperimetric--like constants. This is due to Maz'ya, we refer to \cite[Theorem 2.1.4]{Maz} for more details.
\end{remark}

\section{Comparison of generalized Cheeger's constants}
\label{sec:4}

The next result shows that for an open set, {\it all the generalized Cheeger constants are actually equivalent}, provided $q>1$. In light of Lemma \ref{lemma:angeo}, this result is implicitly contained in \cite[Theorem 15.4.1]{Maz}. We give however a different proof, based on PDE methods. This also produces an explicit constant, which is very likely not optimal.
\begin{theorem}[The case $q>1$]
\label{thm:equivalenti}
Let $N\ge 2$. For every $1<q<N/(N-1)$, there exists an explicit constant $C=C(N,q)>0$ such that for every open set $\Omega\subseteq\mathbb{R}^N$ we have
\[
\left(N\,\omega_N^\frac{1}{N}\right)^{N-\frac{N}{q}}\,\Big(h_1(\Omega)\Big)^{\frac{N}{q}-(N-1)}\le h_q(\Omega)\le C\,\Big(h_1(\Omega)\Big)^{\frac{N}{q}-(N-1)}.
\]
The leftmost inequality is sharp, equality being attained for $N-$dimensional balls.
\end{theorem}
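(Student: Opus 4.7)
\emph{Lower bound.} On any admissible $E\Subset\Omega$ with smooth boundary, I multiply the Euclidean isoperimetric inequality
\[
\mathcal{H}^{N-1}(\partial E)\ge N\omega_N^{1/N}\,|E|^{(N-1)/N},
\]
raised to the power $\theta$, by the elementary bound $\mathcal{H}^{N-1}(\partial E)\ge h_1(\Omega)\,|E|$ raised to the power $1-\theta$. The choice $\theta=N-N/q$ lies in $(0,1)$ precisely because $1<q<N/(N-1)$, and it forces the exponent of $|E|$ to equal $1/q$:
\[
\mathcal{H}^{N-1}(\partial E)\ge \bigl(N\omega_N^{1/N}\bigr)^{N-N/q}\,h_1(\Omega)^{N/q-(N-1)}\,|E|^{1/q}.
\]
Dividing by $|E|^{1/q}$ and taking the infimum over $E$ yields the left inequality with the explicit sharp constant. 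Both ingredients are saturated on balls (the isoperimetric inequality trivially, and $h_1(B_R) |B_R| = \mathcal{H}^{N-1}(\partial B_R)$), so equality holds there.

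\emph{Upper bound.} The direct geometric recipe of taking $E$ to be an inscribed ball fails here: the inradius of $\Omega$ can be arbitrarily smaller than $1/h_1(\Omega)$ (think of a thin elongated $\Omega$), so no ball in $\Omega$ can be used as a competitor. The announced ``PDE methods'' lead me instead to the $p$-torsion function $u_p\in W^{1,p}_0(\Omega)$, solving $-\Delta_p u_p=1$ for $p>1$. By Lemma \ref{lemma:angeo} it suffices to estimate $\lambda_{1,q}(\Omega)$ from above. My plan is to test both $\lambda_{p,1}(\Omega)$ and $\lambda_{p,q}(\Omega)$ against $u_p$ (or sublevel sets thereof through the coarea formula), exploit the torsion identity $\int|\nabla u_p|^p\,dx=\int u_p\,dx$, and invoke the local $L^\infty$-$L^q$ bound of Appendix \ref{sec:A} to convert integrability information for $u_p$ into an $L^\infty$ bound. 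Combining these ingredients should produce a relation of the form
\[
\lambda_{p,q}(\Omega)\le C(N,p,q)\,\lambda_{p,1}(\Omega)^{N/q-(N-1)},
\]
with constants that remain bounded as $p\searrow 1$. Passing to this limit via Lemma \ref{lemma:limitescemo} then transfers the estimate to $\lambda_{1,q}(\Omega)\le C(N,q)\,\lambda_{1,1}(\Omega)^{N/q-(N-1)}=C(N,q)\,h_1(\Omega)^{N/q-(N-1)}$, which is the claimed upper bound.

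The principal obstacle is squarely on the upper bound: implementing the $p$-torsion argument so that all constants remain uniformly bounded as $p\searrow 1$, and making the estimates work for general (possibly unbounded, infinite-volume) open sets $\Omega$, where compactness fails. The non-sharpness of the resulting constant $C$, as acknowledged in the remark after the theorem, is an intrinsic feature of this PDE route rather than a fixable flaw; determining the sharp constant appears to require a different, more delicate geometric argument.
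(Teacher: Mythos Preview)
Your lower bound is correct and essentially identical to the paper's: both interpolate the isoperimetric inequality with the definition of $h_1$, and your choice $\theta=N-N/q$ is exactly the exponent appearing in the paper's decomposition \eqref{decompose}.

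For the upper bound, your plan is in the right spirit but misses the key mechanism. Testing $\lambda_{p,q}$ against the \emph{global} torsion function $u_p$ does not yield the desired inequality: without a volume term there is no way to compare $\|u_p\|_{L^q}$ and $\|u_p\|_{L^1}$ with constants independent of $\Omega$, so the torsion identity alone cannot close the argument. The paper instead tests with a \emph{localized} function $\eta\,u_p$, where $\eta$ is a cutoff supported in a cube $Q_R$ centered at a maximum point of $u_p$. The $L^\infty$--$L^q$ estimate of Appendix~\ref{sec:A} is then used \emph{in reverse}: since $\|u_p\|_{L^\infty(Q_{R/2})}=M:=\|u_p\|_{L^\infty(\Omega)}$ by the choice of center, rearranging it gives a \emph{lower} bound on $\|u_p\|_{L^q(Q_R)}$ in terms of $M$. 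The numerator is controlled by a Caccioppoli inequality, the radius is taken $R\sim M^{(p-1)/p}$, and finally the known torsion bound $M\ge (1/\lambda_p(\Omega))^{1/(p-1)}$ (with $\lambda_p=\lambda_{p,p}$, not $\lambda_{p,1}$, though both tend to $h_1$) brings in $h_1$ after $p\searrow 1$. You also correctly flag the unbounded case as an obstacle but do not resolve it; the paper works first on bounded smooth $\Omega$ (so that both halves of Lemma~\ref{lemma:limitescemo} apply) and only afterwards extends to general $\Omega$ by exhaustion.
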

\begin{proof}
The first inequality is quite easy, it is a straightforward consequence of the Isoperimetric Inequality. Indeed, for every $E\Subset\Omega$ with smooth boundary, we can write
\begin{equation}
\label{decompose}
\frac{\mathcal{H}^{N-1}(\partial E)}{|E|^\frac{1}{q}}=\left(\frac{\mathcal{H}^{N-1}(\partial E)}{|E|}\right)^{\frac{N}{q}-(N-1)}\,\left(\frac{\mathcal{H}^{N-1}(\partial E)}{|E|^\frac{N-1}{N}}\right)^{N-\frac{N}{q}}.
\end{equation}
By using the Isoperimetric Inequality for the second term and the fact that $N-N/q>0$ for $q>1$, we have
\[
\left(\frac{\mathcal{H}^{N-1}(\partial E)}{|E|^\frac{N-1}{N}}\right)^{N-\frac{N}{q}}\ge \Big(N\,\omega_N^\frac{1}{N}\Big)^{N-\frac{N}{q}}.
\]
This immediately gives the leftmost inequality, together with the equality cases, in light of \eqref{palle}.
\vskip.2cm\noindent
The converse inequality is more delicate. We will adapt an idea taken from \cite[Theorem 9]{vanBu}, which involves an $L^\infty$ bound for the so-called {\it $p-$torsion function} of a set.
\par
We first suppose that $\Omega\subseteq\mathbb{R}^N$ is bounded, with smooth boundary. For every $1<p<2$, we take $w_{\Omega,p}$ to be the $p-$torsion function of $\Omega$, i.e. $w_{p,\Omega}\in W^{1,p}_0(\Omega)$ is the unique weak solution belonging to $W^{1,p}_0(\Omega)$ of the equation
\[
-\Delta_p u=1,\qquad \mbox{ in }\Omega.
\]
In other words, we have 
\[
\int_\Omega \langle |\nabla w_{\Omega,p}|^{p-2}\,\nabla w_{\Omega,p},\nabla \varphi\rangle\,dx=\int_\Omega \varphi\,dx,\qquad \mbox{ for every } \varphi\in W^{1,p}_0(\Omega).
\]
We observe that $w_{\Omega,p}\in L^\infty(\Omega)$, by standard regularity results (see for example \cite[Proposition 3.1]{BR} or \cite[Proposition 6]{BE} for an explicit estimate). Moreover, since we are assuming that $\Omega$ is smooth, we have that $w_{\Omega,p}$ is continuous up to the boundary (see for example \cite[Corollary 4.2]{Tr}). It is well-known that if we extend $w_{\Omega,p}$ by zero outside $\Omega$, we get that this extension weakly verifies
\[
-\Delta_p w_{\Omega,p}\le 1,\qquad \mbox{ in }\mathbb{R}^N,
\]
i.e.
\begin{equation}
\label{subsolve}
\int_{\mathbb{R}^N} \langle |\nabla w_{\Omega,p}|^{p-2}\,\nabla w_{\Omega,p},\nabla \varphi\rangle\,dx\le \int_{\mathbb{R}^N} \varphi\,dx,\qquad \mbox{ for every } \varphi\in W^{1,p}_0(\mathbb{R}^N), \, \varphi\ge 0.
\end{equation}
Accordingly, for\footnote{Observe that if $1<p<2\le N$, then 
\[
q<\frac{N}{N-1}<\frac{N\,p}{N-p}=p^*.
\]} $1<q<N/(N-1)$ by Proposition \ref{prop:Linftybound} it satisfies the following $L^\infty-L^q$ local bound 
\begin{equation}
\label{localbound}
\|w_{\Omega,p}\|_{L^\infty(Q_{R/2}(x_0))}\le \mathcal{C}_{N,p,q}\,\left[\left(\fint_{Q_{R}(x_0)} (w_{\Omega,p})^q\,dx\right)^\frac{1}{q}+R^\frac{p}{p-1}\right],
\end{equation}
for every cube
\[
Q_R(x_0)=\prod_{i=1}^N (x_0^i-R,x_0^i+R),\qquad \mbox{ with } x_0=(x_0^1,\dots,x_0^N).
\]
We now set
\[
M=\|w_{\Omega,p}\|_{L^\infty(\mathbb{R}^N)}=\|w_{\Omega,p}\|_{L^\infty(\Omega)}.
\] 
We observe that there exists a point $\overline{x}\in \Omega$ such that  
\[
M=w_{\Omega,p}(\overline{x}).
\]
We fix such a point $\overline{x}$ and, without loss of generality, we can assume that it coincides with the origin and we will omit to indicate it.
\par
Finally, we take $\eta$ to be a Lipschitz cut-off function such that 
\[
0\le \eta\le 1,\qquad \eta\equiv 1 \mbox{ on } Q_{R/2},\qquad \eta\equiv 0 \mbox{ on } \mathbb{R}^N\setminus Q_R,
\]
and
\[
\|\nabla \eta\|_{L^\infty}\le \frac{2}{R},
\]
for a radius $R>0$ whose choice will be declared in a while.
We use $\varphi=\eta\,w_{\Omega,p}/\|\eta\,w_{\Omega,p}\|_{L^q(\Omega)}$ as a test function in the definition of $\lambda_{p,q}(\Omega)$. This yields
\[
\Big(\lambda_{p,q}(\Omega)\Big)^\frac{1}{p}\le \frac{\displaystyle\left(\int_{Q_{R}} |\nabla w_{\Omega,p}|^p\,\eta^p\,dx\right)^\frac{1}{p}+\displaystyle\left(\int_{Q_R} |\nabla \eta|^p\,(w_{\Omega,p})^p\,dx\right)^\frac{1}{p}}{\displaystyle\left(\int_{Q_R} |\eta\,w_{\Omega,p}|^q\,dx \right)^\frac{1}{q}}.
\]
We now observe that 
\[
\left(\int_{Q_R} |\nabla \eta|^p\,(w_{\Omega,p})^p\,dx\right)^\frac{1}{p}\le 2^{1+\frac{N}{p}}\,M\,R^\frac{N-p}{p},
\]
thanks to the properties of $\eta$.
For the first term in the numerator, we use the following Caccioppoli inequality
\begin{equation}
\label{caccioppoli}
\int_{Q_R} |\nabla w_{\Omega,p}|^p\,\eta^p\,dx\le p^p\,\int_{Q_R} |\nabla \eta|^p\,(w_{\Omega,p})^p\,dx+p\,\int_{Q_R} \eta^p\,w_{\Omega,p}\,dx.
\end{equation}
This easily follows from \eqref{subsolve}, by inserting the test function $\varphi=\eta^p\,w_{\Omega,p}$. Indeed, with such a choice we get
\[
\int_{Q_R} |\nabla w_{\Omega,p}|^p\,\eta^p\,dx+p\,\int_{Q_R} \langle |\nabla w_{\Omega,p}|^{p-2}\,\nabla w_{\Omega,p},\nabla \eta\rangle \,\eta^{p-1}\,w_{\Omega,p}\,dx\le \int_{Q_R} \eta^p\,w_{\Omega,p}\,dx.
\]
On the left-hand side, by using Young's inequality we get for every $\delta>0$
\[
\begin{split}
p\,\int_{Q_R} \langle |\nabla w_{\Omega,p}|^{p-2}\,\nabla w_{\Omega,p},\nabla \eta,\rangle \eta^{p-1}\,w_{\Omega,p}\,dx&\ge -\delta\,(p-1)\,\int_{Q_R} |\nabla w_{\Omega,p}|^p\,\eta^p\,dx\\
&-\delta^{1-p}\,\int_{Q_R} |\nabla \eta|^p\,(w_{\Omega,p})^p\,dx.
\end{split}
\]
The last two equations in display give
\[
(1-\delta\,(p-1))\,\int_{Q_R} |\nabla w_{\Omega,p}|^p\,\eta^p\,dx\le \delta^{1-p}\,\int_{Q_R} |\nabla \eta|^p\,(w_{\Omega,p})^p\,dx+\int_{Q_R)} \eta^p\,w_{\Omega,p}\,dx.
\]
By choosing $\delta=1/p$, we then obtain \eqref{caccioppoli}, as claimed. In turn, from \eqref{caccioppoli} and the properties of $\eta$, we get
\[
\begin{split}
\left(\int_{Q_R} |\nabla w_{\Omega,p}|^p\,\eta^p\,dx\right)^\frac{1}{p}&\le \left(2^{p+N}\,p^p\,R^{N-p}\,M^p+2^N\,p\,M\,R^N\right)^\frac{1}{p}\\
&=R^\frac{N-p}{p}\,\left(2^{p+N}\,p^p\,M^p+2^N\,p\,M\,R^p\right)^\frac{1}{p}.
\end{split}
\]
Finally, for the denominator we use that
\[
\begin{split}
\left(\int_{Q_R} |\eta\,w_{\Omega,p}|^q\,dx \right)^\frac{1}{q}&\ge \left(\int_{Q_{R/2}} (w_{\Omega,p})^q\,dx \right)^\frac{1}{q}\\
&\ge \frac{1}{\mathcal{C}_{N,p,q}}\,M\,R^\frac{N}{q}- R^\frac{p}{p-1}\,R^\frac{N}{q},
\end{split}
\]
thanks to \eqref{localbound} with $R/2$ in place of $R$ and thanks to the choice of the cube, which is centered at the maximum point of $w_{\Omega,p}$. By collecting all the estimates, we obtained
\[
\Big(\lambda_{p,q}(\Omega)\Big)^\frac{1}{p}\le R^{\frac{N-p}{p}-\frac{N}{q}}\,\frac{\left(2^{p+N}\,p^p\,M^p+2^N\,p\,M\,R^p\right)^\frac{1}{p}}{\displaystyle \frac{1}{\mathcal{C}_{N,p,q}}\,M-\,R^\frac{p}{p-1}}.
\]
It is now time to declare the choice of $R$: we choose it in such a way that
\[
\frac{M}{\mathcal{C}_{N,p,q}}-\,R^\frac{p}{p-1}=\theta\,\frac{M}{\mathcal{C}_{N,p,q}},
\]
for $0<\theta<1$,
that is
\[
R=\left((1-\theta)\,\frac{M}{\mathcal{C}_{N,p,q}}\right)^\frac{p-1}{p}.
\]
This finally gives
\[
\Big(\lambda_{p,q}(\Omega)\Big)^\frac{1}{p}\le \left(\left((1-\theta)\,\frac{M}{\mathcal{C}_{N,p,q}}\right)^\frac{p-1}{p}\right)^{\frac{N-p}{p}-\frac{N}{q}}\,\frac{\left(2^{p+N}\,p^p+2^N\,p\,\left(\dfrac{1-\theta}{\mathcal{C}_{N,p,q}}\right)^{p-1}\right)^\frac{1}{p}}{\displaystyle \frac{\theta}{\mathcal{C}_{N,p,q}}}.
\]
We can now observe that 
\[
M=\|w_{\Omega,p}\|_{L^\infty(\Omega)}\ge \left(\frac{1}{\lambda_p(\Omega)}\right)^\frac{1}{p-1},
\]
thanks to \cite[Theorem 1.3]{BR} (see also \cite[Proposition 6]{BE}). By using this estimate and the fact that
\[
\frac{N-p}{p}-\frac{N}{q}<0,
\]
we get
\[
\Big(\lambda_{p,q}(\Omega)\Big)^\frac{1}{p}\le \Theta_{N,p,q,\theta}\,\Big(\lambda_p(\Omega)\Big)^{\frac{1}{p}\,\left(\frac{N}{q}-\frac{N-p}{p}\right)},
\]
where 
\[
 \Theta_{N,p,q,\theta}=\left(\left(\frac{1-\theta}{\mathcal{C}_{N,p,q}}\right)^\frac{p-1}{p}\right)^{\frac{N-p}{p}-\frac{N}{q}}\,\frac{\left(2^{p+N}\,p^p+2^N\,p\,\left(\dfrac{1-\theta}{\mathcal{C}_{N,p,q}}\right)^{p-1}\right)^\frac{1}{p}}{\displaystyle \frac{\theta}{\mathcal{C}_{N,p,q}}},
\]
for every $0<\theta<1$.
By taking the limit as $p$ goes to $1$ and using Lemma \ref{lemma:limitescemo}, Lemma \ref{lemma:angeo} and Proposition \ref{prop:Linftybound}, we get
\[
h_q(\Omega)\le \Theta_{N,1,q,\theta}\,\Big(h_1(\Omega)\Big)^{\frac{N}{q}-(N-1)},
\]
with
\[
\Theta_{N,1,q,\theta}=\frac{3\cdot 2^N\,\mathcal{C}_{N,1,q}}{\theta}.
\]
We can finally let $\theta$ go to $1$ and obtain the claimed estimate, under the assumption that $\Omega$ is bounded, with constant given by
\[
C=3\cdot 2^N\,\mathcal{C}_{N,1,q}.
\]
\vskip.2cm\noindent
Finally, in order to remove the boundedness and smoothness assumption on $\Omega$, it is sufficient to take an exhaustion $\{\Omega_n\}_{n\in\mathbb{N}}$ of $\Omega$ made of open bounded smooth sets, see \cite[Proposition 8.2.1]{Dan} for the existence of such an exhaustion.
By using that 
\[
\lim_{n\to\infty} h_q(\Omega_n)=h_q(\Omega),\qquad \mbox{ for every } 1\le q<\frac{N}{N-1},
\]
we can pass to the limit in the estimate previously obtained and conclude.
\end{proof}
\begin{remark}
It would be interesting to compute the sharp constant $C$ for the inequality
\[
h_q(\Omega)\le C\,\Big(h_1(\Omega)\Big)^{\frac{N}{q}-(N-1)},\qquad 1<q<\frac{N}{N-1},
\]
among all possible open sets. The constant obtained with the previous proof is probably quite rough: its precise expression can be obtained by looking at the value of $\mathcal{C}_{N,1,q}$ obtained in the proof of Proposition \ref{prop:Linftybound}.
\end{remark}
For $0<q<1$, the previous result holds only partially. More precisely, we have the following
\begin{proposition}[The case $0<q<1$]
\label{prop:subbo}
Let $N\ge 2$, for every $0<q<1$ and every open set $\Omega\subseteq\mathbb{R}^N$ we have
\[
h_q(\Omega)\le \left(N\,\omega_N^\frac{1}{N}\right)^{N-\frac{N}{q}}\,\Big(h_1(\Omega)\Big)^{\frac{N}{q}-(N-1)},
\]
with equality for $N-$dimensional balls. Moreover, there exists an open set $\Omega\subseteq\mathbb{R}^N$ such that
\[
h_1(\Omega)>0\qquad \mbox{ and }\qquad h_q(\Omega)=0.
\]
\end{proposition}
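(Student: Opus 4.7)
My proof proposal splits naturally into two parts, mirroring the structure of the statement.

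\emph{Part 1: the inequality.} My plan is to reuse verbatim the algebraic decomposition employed at the start of the proof of Theorem \ref{thm:equivalenti}, namely
\[
\frac{\mathcal{H}^{N-1}(\partial E)}{|E|^{1/q}}=\left(\frac{\mathcal{H}^{N-1}(\partial E)}{|E|}\right)^{\frac{N}{q}-(N-1)}\left(\frac{\mathcal{H}^{N-1}(\partial E)}{|E|^{(N-1)/N}}\right)^{N-\frac{N}{q}},
\]
valid for every $E\Subset\Omega$ with smooth boundary. The crucial change in the regime $0<q<1$ is that the exponent $N-N/q$ is now \emph{negative}, so the Euclidean isoperimetric inequality $\mathcal{H}^{N-1}(\partial E)\ge N\,\omega_N^{1/N}\,|E|^{(N-1)/N}$, which bounds the second factor from below, gets reversed when raised to this power. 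This gives
\[
\frac{\mathcal{H}^{N-1}(\partial E)}{|E|^{1/q}}\le \left(N\,\omega_N^{1/N}\right)^{N-\frac{N}{q}}\left(\frac{\mathcal{H}^{N-1}(\partial E)}{|E|}\right)^{\frac{N}{q}-(N-1)}.
\]
Since the remaining exponent $N/q-(N-1)$ is still strictly positive (this is where $q<N/(N-1)$ enters), I can take the infimum over admissible $E$ on both sides and obtain the claimed estimate. Equality in the case of a ball follows from a direct algebraic check against formula \eqref{palle}: both sides collapse to $N\,\omega_N^{1-1/q}\,R^{N-1-N/q}$.

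\emph{Part 2: the counter-example.} I will take $\Omega=\mathbb{R}^{N-1}\times(-1,1)$, the slab already announced in the statement list. For the positivity $h_1(\Omega)>0$, I will use the classical divergence-type argument: the vector field $V(x',x_N)=(0,\ldots,0,x_N)$ satisfies $\mathrm{div}\,V\equiv 1$ and $\|V\|_{L^\infty(\Omega)}\le 1$, so for every $E\Subset\Omega$ with smooth boundary,
\[
|E|=\int_E\mathrm{div}\,V\,dx=\int_{\partial E}\langle V,\nu\rangle\,d\mathcal{H}^{N-1}\le \mathcal{H}^{N-1}(\partial E),
\]
giving $h_1(\Omega)\ge 1$. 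To show $h_q(\Omega)=0$, I follow the cylindrical heuristic of observation (2) in the Introduction and test the infimum against (suitably smoothed) cylinders $E_L$ modelled on $B'_L(0)\times(-1/2,1/2)$, where $B'_L(0)\subset\mathbb{R}^{N-1}$ is the $(N-1)$-dimensional open ball of radius $L$. A direct count of top, bottom, and lateral contributions gives $|E_L|=\omega_{N-1}\,L^{N-1}$ and $\mathcal{H}^{N-1}(\partial E_L)=2\,\omega_{N-1}\,L^{N-1}+(N-1)\,\omega_{N-1}\,L^{N-2}$, so the ratio behaves asymptotically as $L^{(N-1)(1-1/q)}\to 0$ as $L\to\infty$, since $1-1/q<0$.

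I do not foresee any serious obstacle. The only mild technicality is that the literal cylinder has Lipschitz corners rather than a smooth boundary, which I would handle by rounding them off (or equivalently by replacing $E_L$ with the smooth super-ellipsoid $\{|x'|^2/L^2+(2\,x_N)^{2k}<1\}$ for $k$ large): in both cases the corrections to $|E_L|$ and to $\mathcal{H}^{N-1}(\partial E_L)$ are of lower order in $L$ and do not affect the leading asymptotics.
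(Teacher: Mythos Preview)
Your proposal is correct and follows essentially the same route as the paper. Part~1 is identical to the paper's argument (the paper just points back to the decomposition \eqref{decompose} and observes that the exponent $N-N/q$ flips sign). For Part~2, the paper also takes the slab $\mathbb{R}^{N-1}\times(-2,2)$; it argues $h_1>0$ via the identity $h_1=\lambda_{1,1}$ rather than your divergence-theorem computation, and for $h_q=0$ it tests against the \emph{smooth} ellipsoids $\mathcal{E}_L=\{\sum_{i=1}^{N-1}x_i^2/L^2+x_N^2<1\}$ instead of your cylinders, which spares the rounding-off step you flag at the end --- but the asymptotic $L^{(N-1)(1-1/q)}$ is the same either way.
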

\begin{proof}
The upper bound on $h_q(\Omega)$ can be proved as the lower bound in Theorem \ref{thm:equivalenti}. It is sufficient to observe that in the right-hand side of \eqref{decompose}, the exponent $N-N/q$ is now negative. Thus, by using the Isoperimetric Inequality as above, this time we get an upper bound (still sharp). 
\par
We now take $\Omega=\mathbb{R}^{N-1}\times(-2,2)$. Since $\Omega$ is bounded in the direction $\mathbf{e}_N$, by using that $\lambda_{1,1}(\Omega)=h_1(\Omega)$, we immediately get that $h_1(\Omega)>0$. In order to prove that $h_q(\Omega)=0$, we take the ellipsoid
\[
\mathcal{E}_L=\left\{x=(x_1,\dots,x_N)\in\mathbb{R}^N\, :\, \sum_{i=1}^{N-1}\, \frac{x_i^2}{L^2}+x_N^2<1\right\},
\]
for $L>1$.
Its volume is simply given by $|\mathcal{E}_L|=\omega_N\,L^{N-1}$. The surface measure of its boundary is given by
\[
\mathcal{H}^{N-1}(\partial \mathcal{E}_L)=2\,\int_{B'_L(0)} \sqrt{1+|\nabla f(x')|^2}\,dx', 
\]
where $x'=(x_1,\dots,x_{N-1})$ and $B'_L(0)$ is the $(N-1)-$dimensional ball centered at the origin, having radius $L$. The function $f$ is given by
\[
f(x')=\sqrt{1-\frac{|x'|^2}{L^2}},\qquad \mbox{ for } |x'|<L.
\]
With simple computations, we see that 
\[
\begin{split}
\mathcal{H}^{N-1}(\partial \mathcal{E}_L)=2\,\int_{B'_L(0)} \sqrt{1+|\nabla f(x')|^2}\,dx'&=2\,\int_{B_L'(0)} \sqrt{1+\frac{1}{L^2}\,\frac{|x'|^2}{L^2-|x'|^2}}\,dx'\\
&=2\,(N-1)\,\omega_{N-1}\,\int_0^L \sqrt{1+\frac{1}{L^2}\,\frac{\varrho^2}{L^2-\varrho^2}}\,\varrho^{N-2}\,d\varrho\\
&=2\,(N-1)\,\omega_{N-1}\,L^{N-1}\,\int_0^1 \sqrt{1+\frac{1}{L^2}\,\frac{t^2}{1-t^2}}\,t^{N-2}\,dt\\
&\le 2\,(N-1)\,\omega_{N-1}\,L^{N-1}\,\int_0^1 \sqrt{\frac{1}{1-t^2}}\,t^{N-2}\,dt.
\end{split}
\]
We thus get
\[
h_q(\Omega)\le \frac{\mathcal{H}^{N-1}(\partial \mathcal{E}_L)}{|\mathcal{E}_L|^\frac{1}{q}}=C_{N,q}\,L^{(N-1)\,\left(1-\frac{1}{q}\right)}.
\]
By taking the limit as $L$ goes to $+\infty$ and using that $0<q<1$, we get the desired conclusion.
\end{proof}

\section{Generalized Cheeger's constants for convex planar sets}
\label{sec:5}

\subsection{Unbounded convex sets}
For an open set $\Omega\subsetneq\mathbb{R}^N$, we will use the following notation
\[
d_\Omega(x)=\min_{y\in\partial\Omega} |x-y|,\qquad \mbox{ for } x\in\Omega,
\]
for the {\it distance function}. We also introduce the notation
\[
r_\Omega=\sup_{x\in\Omega} d_\Omega(x).
\]
We recall that such a quantity is called {\it inradius}. This coincides with the supremum of the radii of open balls entirely contained in $\Omega$. Finally, we define
\[
\mathcal{M}(\Omega):=\Big\{x\in\Omega\, :\, B_{r_\Omega}(x)\subseteq\Omega\Big\},
\]
the {\it high ridge set} of $\Omega$. 
\par
The following technical result will be useful in a while. This should be well-known. 
\begin{lemma}
\label{lemma:palleinfinite}
Let $\Omega\subseteq\mathbb{R}^N$ an unbounded open convex set. For every $0<r<r_\Omega$, 
there exists a sequence of points $\{x_n\}_{n\in\mathbb{N}}\subseteq\Omega$, all laying on the same line, such that
\[
\lim_{n\to\infty} |x_n|=+\infty\qquad \mbox{ and }\qquad B_r(x_n)\subseteq \Omega.
\]
Moreover, if $\mathcal{M}(\Omega)\not=\emptyset$, the previous result is valid for $r=r_\Omega$, as well.
\end{lemma}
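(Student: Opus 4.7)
The plan is to exploit that an unbounded open convex set must contain a whole ray, and moreover a tube of large radius around that ray, by a compactness argument on the directions of sequences going to infinity, coupled with a convex combination that transports a fixed inscribed ball to every point of the ray.

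First I would fix an auxiliary radius $r' \in (r, r_\Omega)$ (possible by definition of $r_\Omega$) and pick a point $y \in \Omega$ such that $B_{r'}(y) \subseteq \Omega$. Since $\Omega$ is unbounded, there is a sequence $\{z_n\}_{n \in \mathbb{N}} \subseteq \Omega$ with $|z_n - y| \to +\infty$. Setting $v_n = (z_n - y)/|z_n - y|$, by compactness of the unit sphere I may assume $v_n \to v$ with $|v| = 1$. For every fixed $s \ge 0$ and every $n$ large enough that $t_n := s/|z_n - y| \le 1$, convexity of $\Omega$ gives
\[
B_{(1-t_n)\,r'}\bigl(y + s\,v_n\bigr) \;=\; (1-t_n)\,B_{r'}(y) + t_n\,z_n \;\subseteq\; \Omega,
\]
since every point on the right is a convex combination of a point in $B_{r'}(y) \subseteq \Omega$ and $z_n \in \Omega$.

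Next I would pass to the limit in $n$: because $y + s\,v_n \to y + s\,v$ and $(1-t_n)\,r' \to r' > r$, the inclusion $B_r(y+sv) \subseteq B_{(1-t_n)\,r'}(y+s\,v_n) \subseteq \Omega$ holds for all $n$ large. Hence $B_r(y + s\,v) \subseteq \Omega$ for every $s \ge 0$. Choosing $x_n = y + n\,v$ then yields the desired sequence, all lying on the single line $\{y + t\,v : t \in \mathbb{R}\}$, with $|x_n| \ge n - |y| \to +\infty$ and $B_r(x_n) \subseteq \Omega$.

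For the second assertion, if $\mathcal{M}(\Omega) \ne \emptyset$ one simply picks $y \in \mathcal{M}(\Omega)$ and runs the same argument with $r'$ replaced by $r_\Omega$. The only point deserving attention is that now one only gets $B_\rho(y+s\,v) \subseteq \Omega$ for every $\rho < r_\Omega$ directly; but taking the union over such $\rho$ recovers the full open ball $B_{r_\Omega}(y + s\,v) \subseteq \Omega$, and the rest of the argument is unchanged. The only (mild) obstacle I anticipate is precisely this open-versus-closed ball bookkeeping at the limit step, and it is avoided by working with the strict inequality $r < r'$ (or $\rho < r_\Omega$) before passing to the limit, which keeps all balls open and strictly inside $\Omega$.
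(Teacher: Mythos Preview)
Your proof is correct and follows essentially the same idea as the paper: both arguments find a ray in $\Omega$ and use convex combinations of the initial inscribed ball with points escaping to infinity to show that balls of the desired radius sit at every point of the ray. The only cosmetic differences are that you extract the ray direction via a compactness argument on $\mathbb{S}^{N-1}$ (the paper simply asserts the half-line exists) and you introduce an auxiliary radius $r'>r$ to absorb the limit in the direction, whereas the paper fixes the direction first and then lets the far endpoint $x_m$ go to infinity to get $d_\Omega(x_n)\ge (1-n/m)\,r\to r$ directly.
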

\begin{proof}
Let us take $0<r<r_\Omega$, by definition there exists a point $\overline{x}\in\Omega$ such that $B_r(\overline{x})\subseteq\Omega$.
For every $\omega\in \mathbb{S}^{N-1}$ we set
\[
L_\omega(\overline x)=\Big\{x\in\mathbb{R}^N\, :\, x=\overline x+t\,\omega \mbox{ for some } t\ge 0\Big\},
\]
i.e. the half-line originating from $\overline x$, with direction $\omega$. By convexity, we have that $L_\omega(\overline x)\cap \Omega$ is a segment. Since $\Omega$ is unbounded, there exists a direction $\omega_0\in\mathbb{S}^{N-1}$ such that 
\[
L_{\omega_0}(\overline x)\cap \Omega=L_{\omega_0}(\overline x).
\]
We now choose the following sequence of points 
\[
x_n=\overline{x}+n\,r\,\omega_0\in \Omega,\qquad \mbox{ for every } n\in\mathbb{N}.
\]
We have to prove that $B_r(x_n)\subseteq\Omega$: at this aim, we are going to show that $d_\Omega(x_n)\ge r$. We fix $n\in\mathbb{N}\setminus\{0\}$ and take $m\ge n+1$. We then consider the convex hull $T_m$ of the disk $B_{r}(\overline x)$ and the point $x_m$. Of course, we have that $T_m\subseteq \Omega$, by convexity of the latter. We then have 
\[
d_\Omega(x_n)\ge \mathrm{dist}(x_n,\partial T_m).
\]
\begin{figure}
\includegraphics[scale=.25]{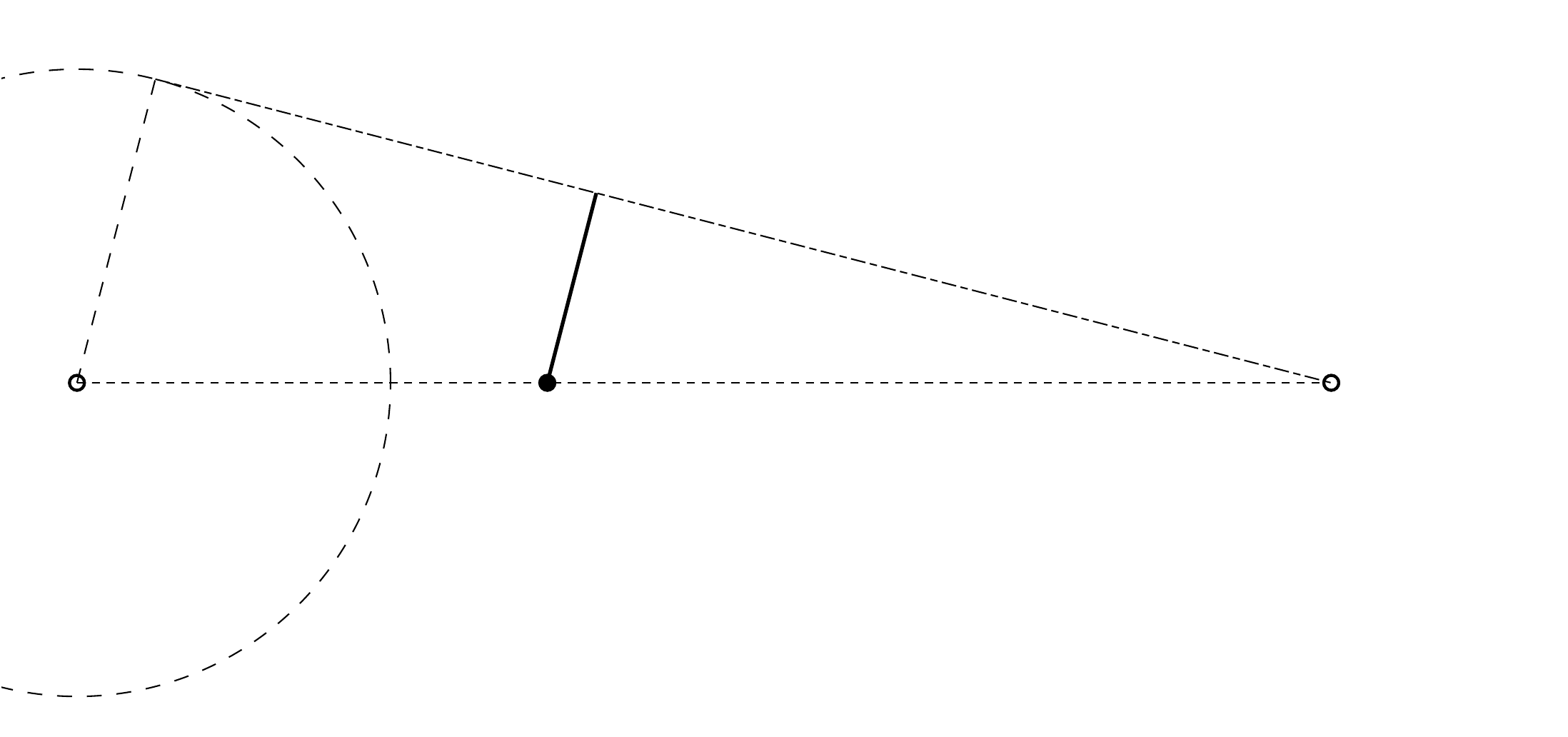}
\caption{The construction in Lemma \ref{lemma:palleinfinite}: the leftmost point is $\overline{x}$, while the rightmost one is $x_{m}$, with $m\ge n+1$. In dashed line, the radius $r$. The black dot is $x_n$, whose distance from the boundary $\partial\Omega$ is at least the length of the segment in bold line.}
\label{fig:lemma51}
\end{figure}
By simple geometric considerations (see Figure \ref{fig:lemma51}), we see that 
\[
\mathrm{dist}(x_n,\partial T_m)\ge \left(1-\frac{n}{m}\right)\,r,\qquad \mbox{ for } m\ge n+1.
\]
By joining the last two estimates and taking the limit as $m$ goes to $\infty$, we get the conclusion.
\end{proof}
In the next geometric results, we restrict ourselves to dimension $N=2$ and we study the properties of unbounded convex sets, with finite inradius. In higher dimension, the picture would be slightly more complicate. It is useful to recall that for a convex set, the high ridge set $\mathcal{M}(\Omega)$, if it is not empty, is a closed convex set, with empty interior. Actually, it coincides with the set of maximum points of the distance function
\[
d_\Omega(x)=\min_{y\in\partial \Omega} |x-y|,\qquad \mbox{ for } x\in\Omega,
\]
which is concave, by convexity of $\Omega$.
\begin{lemma}
\label{lemma:tubi}
Let $\Omega\subseteq\mathbb{R}^2$ be an unbounded open convex set, with $r_\Omega<+\infty$. Let us suppose that $\mathcal{M}(\Omega)\not=\emptyset$. Then, up to a rigid movement, it holds
\begin{equation}
\label{salsiccia}
[0,+\infty)\times(-r_\Omega,r_\Omega)\subseteq \Omega\subseteq \mathbb{R}\times(-r_\Omega,r_\Omega).
\end{equation}
\end{lemma}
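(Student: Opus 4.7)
The plan is to prove the two inclusions separately. For the inner inclusion $[0,+\infty)\times(-r_\Omega,r_\Omega)\subseteq\Omega$, I would invoke Lemma \ref{lemma:palleinfinite} with the admissible radius $r=r_\Omega$, which is legitimate because $\mathcal{M}(\Omega)\neq\emptyset$. This yields a sequence $\{x_n\}_{n\in\mathbb{N}}\subseteq\Omega$ lying on a common line, with $|x_n|\to+\infty$ and $B_{r_\Omega}(x_n)\subseteq\Omega$ for every $n$. A suitable rigid motion places this line on the $x$-axis with $x_0$ at the origin and the $x$-coordinates of the $x_n$ diverging to $+\infty$, so $x_n=(t_n,0)$ with $0=t_0<t_1<\cdots\to+\infty$. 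Each ball $B_{r_\Omega}((t_n,0))$ contains the vertical segment $\{t_n\}\times(-r_\Omega,r_\Omega)$, and convexity of $\Omega$ then forces the rectangle $[0,t_n]\times(-r_\Omega,r_\Omega)$ to lie in $\Omega$ for every $n$. Letting $n\to\infty$ produces the claimed half-strip.

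For the outer inclusion $\Omega\subseteq\mathbb{R}\times(-r_\Omega,r_\Omega)$, I would argue by contradiction. Suppose some $p=(a,b)\in\Omega$ satisfies $b\geq r_\Omega$ (the case $b\leq -r_\Omega$ is handled symmetrically). Openness of $\Omega$ supplies a nearby point with strictly larger $y$-coordinate, so I may in fact assume $b>r_\Omega$. By convexity, $\Omega$ contains the convex hull of $\{p\}$ with the half-strip $S:=[0,+\infty)\times(-r_\Omega,r_\Omega)$ obtained in the first part, and the goal is to inscribe inside this convex hull a disk of radius strictly greater than $r_\Omega$. Fix $\delta\in(0,(b-r_\Omega)/2)$. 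The point $(X,-r_\Omega+\delta)$ lies in $S$ for every $X\geq 0$. Moreover, parametrising the segment joining $p$ to a point $(X',r_\Omega-\delta)\in S$, the $y$-coordinate at $x$-coordinate $X$ equals $b-\delta$ precisely when
\[
X'=a+(X-a)\,\frac{b-r_\Omega+\delta}{\delta},
\]
which is non-negative for all $X$ sufficiently large (independently of the sign of $a$). Hence $(X,b-\delta)\in\Omega$ for such $X$, and convexity then places the entire vertical segment $\{X\}\times[-r_\Omega+\delta,b-\delta]$ inside $\Omega$. Sweeping $X$ over a long positive interval yields a rectangle in $\Omega$ of vertical width $b+r_\Omega-2\delta$ and arbitrarily large horizontal extent; this rectangle contains an inscribed disk of radius $(b+r_\Omega)/2-\delta$, which by the choice of $\delta$ exceeds $r_\Omega$. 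This contradicts the very definition of the inradius, completing the proof.

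The main obstacle is the second step: the geometric intuition that adjoining a point strictly above the strip must strictly enlarge the inradius has to be made quantitative, via an \emph{explicit} inscribed disk. The argument above exploits the asymptotic flatness at infinity of rays from $p$ toward points lying far to the right of $S$: at large $x$-coordinate, these rays remain close to the horizontal line $y=b$, so the convex hull essentially resembles a wider strip, inside which an oversized disk is easy to inscribe. No further input beyond Lemma \ref{lemma:palleinfinite} and convexity/openness of $\Omega$ is needed.
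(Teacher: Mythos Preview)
Your proposal is correct and follows essentially the same route as the paper: the inner inclusion via Lemma~\ref{lemma:palleinfinite} and convexity, the outer one by contradiction through a disk of radius exceeding $r_\Omega$ inscribed in the convex hull of the half-strip and an external point. The only differences are cosmetic: for the first inclusion the paper observes that $\mathcal{M}(\Omega)$ itself is unbounded and convex, hence contains a half-line of centers, whereas you work directly with the diverging sequence of balls; for the second inclusion the paper simply asserts ``it is easy to see that such a set contains a disk with radius strictly larger than $r_\Omega$'', while you carry out the explicit construction.
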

\begin{proof}
We first observe that in this case, since $\Omega$ is unbounded, we must have that $\mathcal{M}(\Omega)$ is unbounded as well. This is a plain consequence of Lemma \ref{lemma:palleinfinite} applied with $r=r_\Omega$ and the convexity of $\mathcal{M}(\Omega)$. This yields that $\mathcal{M}(\Omega)$ must contain a half-line. Up to a rigid movement, we can suppose that this coincides with $[0,+\infty)\times\{0\}$. 
Since this half-line is made of centers of disks with maximal radius contained in $\Omega$, we also get that 
\[
[0,+\infty)\times(-r_\Omega,r_\Omega)\subseteq \Omega.
\]
Let us now suppose that there exists a point $z\in\Omega\setminus(\mathbb{R}\times(-r_\Omega,r_\Omega))$. By convexity, then $\Omega$ must contain the convex hull of $\{z\}$ and $[0,+\infty)\times(-r_\Omega,r_\Omega)$. It is easy to see that such a set contains a disk with radius strictly larger than $r_\Omega$. This would violate the maximality of $r_\Omega$. In conclusion, we get that 
\[
\Omega\setminus(\mathbb{R}\times(-r_\Omega,r_\Omega))=\emptyset,
\]
and thus \eqref{salsiccia} follows.
\end{proof}
We now inquire about the structure of an unbounded planar convex set with finite inradius, having empty high ridge set. We have the following result, whose proof is lenghty, though elementary.
\begin{lemma}
\label{lemma:unbounded}
Let $\Omega\subseteq\mathbb{R}^2$ be an unbounded open convex set, with $r_\Omega<+\infty$. Let us suppose that $\mathcal{M}(\Omega)=\emptyset$. Then there exists a convex function $f:(-r_\Omega,r_\Omega)\to\mathbb{R}$ with at least one of the two limits
\[
\lim_{t\to (-r_\Omega)^+} f(t),\qquad\lim_{t\to (r_\Omega)^-} f(t),
\]
equal to $+\infty$,
such that, up to a rigid movement, we have
\[
\Omega=\Big\{(x_1,x_2)\in\mathbb{R}^2\,:\, -r_\Omega<x_2<r_\Omega\, \mbox{ and } x_1>f(x_2)\Big\}.
\]
\end{lemma}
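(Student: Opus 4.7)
The plan is to identify the recession direction of $\Omega$, write $\Omega$ as the region lying to the right of the graph of a convex function $f$ defined on an open interval of length $2r_\Omega$, and then use the emptiness of $\mathcal{M}(\Omega)$ to force the blow-up of $f$ at (at least) one endpoint.

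First I would analyze the recession cone $R$ of $\overline\Omega$. Since $\Omega$ is unbounded, $R\neq\{0\}$. If $R$ contained two linearly independent directions, $\overline\Omega$ would contain a translated two-dimensional sector and hence disks of arbitrary radius, contradicting $r_\Omega<+\infty$. Thus $R$ is contained in a line through the origin. If $R$ coincided with the whole line $\mathbb{R}v$, then $\Omega$ would be invariant under all translations by $tv$, $t\in\mathbb{R}$, and hence would be an infinite strip parallel to $v$; but the central line of such a strip lies in $\mathcal{M}(\Omega)$, contradicting the hypothesis. Therefore $R$ is a single half-line, and up to a rigid motion we may assume $R=[0,+\infty)\mathbf{e}_1$.

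Next I would describe $\Omega$ slice by slice. For each $x_2\in\mathbb{R}$, the slice $\Omega\cap\{y_2=x_2\}$ is an open interval in $y_1$; since $\mathbf{e}_1\in R$ and $-\mathbf{e}_1\notin R$, it has the form $(f(x_2),+\infty)$ with $f(x_2)\in\mathbb{R}$ whenever nonempty. Setting $I=\pi_2(\Omega)$ (an open interval, as the projection of an open convex set), this gives $\Omega=\{(x_1,x_2):x_2\in I,\,x_1>f(x_2)\}$ with $f:I\to\mathbb{R}$ convex. To pin down $|I|$: the inclusion $\Omega\subseteq\mathbb{R}\times I$ immediately gives $r_\Omega\le|I|/2$; conversely, for any $r<|I|/2$ the convex function $f$ is bounded on the closed subinterval of length $2r$ centered at the midpoint of $I$, so for $x_1$ sufficiently large the ball of radius $r$ centered at $(x_1,\mathrm{mid}(I))$ sits inside $\Omega$, giving $r_\Omega\ge|I|/2$. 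Hence $|I|=2r_\Omega$, and a translation in the $x_2$-direction places $I=(-r_\Omega,r_\Omega)$.

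Finally I would exploit $\mathcal{M}(\Omega)=\emptyset$. Any ball $B_{r_\Omega}(x_1,x_2^*)$ contained in $\Omega\subseteq\mathbb{R}\times(-r_\Omega,r_\Omega)$ must have $x_2^*=0$, and the inclusion $B_{r_\Omega}(x_1,0)\subseteq\Omega$ translates into
\[ x_1\ge M:=\sup_{|t|<r_\Omega}\Bigl[f(t)+\sqrt{r_\Omega^2-t^2}\Bigr]. \]
If $M<+\infty$, any $x_1\ge M$ would put $(x_1,0)\in\mathcal{M}(\Omega)$, contrary to assumption; hence $M=+\infty$. Since $\sqrt{r_\Omega^2-t^2}\le r_\Omega$, this forces $f$ to be unbounded from above on $(-r_\Omega,r_\Omega)$, and since a convex function on an open interval attains its supremum as a one-sided endpoint limit, at least one of $\lim_{t\to(-r_\Omega)^+}f(t)$, $\lim_{t\to r_\Omega^-}f(t)$ equals $+\infty$. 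I expect the recession-cone step to be the main subtlety, since ruling out the case $R=\mathbb{R}v$ specifically uses the hypothesis $\mathcal{M}(\Omega)=\emptyset$ rather than only $r_\Omega<+\infty$.
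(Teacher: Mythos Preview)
Your argument is correct and follows the same skeleton as the paper's proof: isolate the unique unbounded direction, write $\Omega$ as the strict epigraph of a convex function on an interval of width $2r_\Omega$, and then use $\mathcal{M}(\Omega)=\emptyset$ to force the blow-up at an endpoint. The only noticeable difference is in the first step, where you invoke the recession cone of $\overline\Omega$, while the paper argues by hand with orthogonal projections onto lines; your packaging is a bit more economical, but the two routes are equivalent.
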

\begin{proof}
We set $\mathbb{S}^1_+=\{\omega=(\cos\vartheta,\sin\vartheta)\, :\, \vartheta\in[0,\pi)\}$.
For every $\omega\in\mathbb{S}^1_+$, we define $\Pi_\omega:\mathbb{R}^2\to \langle \omega^\bot\rangle$ the orthogonal projection on $\langle \omega^\bot\rangle$, given by
\[
\Pi_\omega(x)=x-\langle x,\omega\rangle\,\omega,\qquad \mbox{ for every }x\in\mathbb{R}^2.
\]
We need to show at first that, under the standing assumptions on $\Omega$, the following property holds true: 
\begin{equation}
\label{mandacelabuona!}
\mbox{ there exists a unique $\omega_0\in\mathbb{S}^1_+$ such that $\Pi_{\omega_0}(\Omega)$ is bounded}.
\end{equation}
Indeed, assume this were not true, then we would have two possibilities:
\begin{itemize}
\item[(i)] either there exist (at least) two distinct $\omega_1,\omega_2\in\mathbb{S}^1_+$ such that both $\Pi_{\omega_1}(\Omega)$ and $\Pi_{\omega_2}(\Omega)$ are bounded;
\vskip.2cm
\item[(ii)] or the projection $\Pi_\omega(\Omega)$ is unbounded, for every $\omega\in\mathbb{S}^1_+$.
\end{itemize}
In case (i), let us set $\Omega_i=\Pi_{\omega_i}(\Omega)$, for $i=1,2$. By assumption, these are two non-collinear segments. Accordingly, we have that $\Pi_{\omega_i}^{-1}(\Omega_i)$ are two non-parallel strips. By construction, we would have
\[
\Omega\subseteq \Pi_{\omega_1}^{-1}(\Omega_1)\cap \Pi_{\omega_2}^{-1}(\Omega_2),
\]
and the latter is a bounded set. This would contradict the fact that $\Omega$ is unbounded and thus case (i) can not hold.
\par
We now suppose that case (ii) holds. By Lemma \ref{lemma:palleinfinite}, we get in particular that $\Omega$ must contain a half-line $\mathcal{L}_1$. Up to a rigid movement, we can suppose that $\mathcal{L}_1$ has direction $\mathbf{e}_1=(1,0)$. We now consider the projection $\Pi_{\mathbf{e}_1}(\Omega)$, by assumption this is a one-dimensional unbounded convex set, i.e. it contains a half-line. By convexity, this entails that $\Omega$ must contain another half-line $\mathcal{L}_2$, such that $\mathcal{L}_1\not=\mathcal{L}_2$. More precisely, $\mathcal{L}_1$ and $\mathcal{L}_2$ are not parallel, since the image through $\Pi_{\mathbf{e}_1}$ of every half-line parallel to $\mathcal{L}_1$ would be a single point. Since $\Omega$ is convex, it must contain the convex hull of $\mathcal{L}_1\cup \mathcal{L}_2$. The latter contains arbitrarily large disks, since $\mathcal{L}_1$ and $\mathcal{L}_2$ are not parallel. This contradicts the fact that $r_\Omega<+\infty$.
\par
In conclusion, we established the validity of \eqref{mandacelabuona!}. Without loss of generality, we can suppose that $\omega=\mathbf{e}_1$ and that $\Pi_{\mathbf{e}_1}(\Omega)=(-a,a)$, for a suitable $a>0$. Thus, we have that 
\[
\Omega\subseteq \mathbb{R}\times(-a,a).
\] 
Observe that $a\ge r_\Omega$: indeed, for every $\varepsilon>0$, we have that $\Omega$ contains at least a disk of radius $r_\Omega-\varepsilon$. The orthogonal projection of this disk along the direction $\mathbf{e}_1$ is a segment of length $2\,r_\Omega-\varepsilon$, thus $(-a,a)$ must contain a segment having this length. By arbitrariness of $\varepsilon$, we get that $a\ge r_\Omega$, as claimed. 
\par
We have already observed that $\Omega$ must contain at least a half-line $\mathcal{L}_1$. On the other hand, it can not contain a line, otherwise $\Omega$ would coincide with a strip. Since for a strip the high ridge set is not empty, we would get a contradiction with the assumption $\mathcal{M}(\Omega)=\emptyset$. 
\par
For every $x_2\in(-a,a)$, we now define
\[
f(x_2)=\inf\{x_1\, :\, (x_1,x_2)\in\Omega\}\qquad \mbox{ and }\qquad g(x_2)=\sup\{x_1\, :\, (x_1,x_2)\in\Omega\}.
\]
By construction, we have $(f(x_2),g(x_2))\times\{x_2\}\subseteq\Omega$. Since $\Omega$ can not contain a line, for every $x_2\in(-a,a)$ we have that either $f(x_2)>-\infty$ or $g(x_2)<+\infty$. Let us fix $x_2\in(-a,a)$, without loss of generality we suppose that $f(x_2)>-\infty$. Then we must have $g(x_2)=+\infty$: indeed, if $g(x_2)<+\infty$, then $\Omega$ has to contain the convex hull of the half-line $\mathcal{L}_1$ and the segment $(f(x_2),g(x_2))\times\{x_2\}$. Such a convex hull contains in particular the half-line $(f(x_2),+\infty)\times\{x_2\}$, thus contradicting the definition of $g(x_2)$.
\par
We pick $t\in(-a,a)$ such that $t\not=x_2$, we show that we must have $f(t)>-\infty$ and $g(t)=+\infty$, as well. We already know that these quantities can not be both infinite or both finite. We have to exclude the case $f(t)=-\infty$ and $g(t)<+\infty$. In this case, $\Omega$ would contain the convex hull of the two half-lines $(-\infty,g(t))\times\{t\}$ and $(f(x_2),+\infty)\times\{x_2\}$. Such a convex hull is given by a strip, thus $\Omega$ would contain a line, which is not possible. 
\par
From this discussion, we finally obtain that
\[
f(x_2)>-\infty \qquad \mbox{ and }\qquad g(x_2)=+\infty,\qquad \mbox{ for every } -a<x_2<a.
\] 
It is not difficult to see that $f$ is convex: let us take $\tau\in(0,1)$ and $x_2,t_2\in(-a,a)$. Let $x_1$ and $t_1$ be such that $(x_1,x_2)\in\Omega$ and $(t_1,t_2)\in\Omega$. By convexity, we have $(\tau\,x_1+(1-\tau)\,t_1,\tau\,x_2+(1-\tau)\,t_2)\in\Omega$, as well. Thus, by definition of $f$ we get
\[
f(\tau\,x_2+(1-\tau)\,t_2)\le \tau\,x_1+(1-\tau)\,t_1.
\]
By taking first the infimum over the admissible $x_1$ and then over the admissible $t_1$, we finally obtain
\[
f(\tau\,x_2+(1-\tau)\,t_2)\le \tau\,f(x_2)+(1-\tau)\,f(t_2).
\]
The fact that $\Omega$ coincides with the epigraph of $f$ follows by its construction and the convexity of $\Omega$. We also observe that actually it holds $a=r_\Omega$. Indeed, we already proved that $a\ge r_\Omega$: on the other hand, if $a>r_\Omega$ by the previous properties we would get that $\Omega$ contains the two half-lines
\[
(f(r_\Omega),+\infty)\times\{r_\Omega\}\qquad \mbox{ and }\qquad (f(-r_\Omega),+\infty)\times\{-r_\Omega\}.
\]
Then it must contain their convex hull, as well. In particular, $\Omega$ would contain an open disk of radius $r_\Omega$, violating the fact that $\mathcal{M}(\Omega)=\emptyset$. 
\par
Finally, the fact that $f$ must blow-up in at least one of the extrema $r_\Omega$ or $-r_\Omega$ follows from a similar argument: if both limits were finite, then $\Omega$ would contain an open half-strip with width $2\,r_\Omega$. In particular, we would have again $\mathcal{M}(\Omega)\not=\emptyset$.
\end{proof}

\subsection{An existence result}

The main result of this section is the following
\begin{theorem}
\label{thm:existence_convex}
Let $\Omega\subseteq\mathbb{R}^2$ be an open convex set with $r_\Omega<+\infty$. Then, for every $1<q<2$ we have 
\begin{equation}
\label{relaxed}
h_q(\Omega)=\inf\left\{\frac{\mathcal{H}^{1}(\partial E)}{|E|^\frac{1}{q}}\, :\, E\subseteq\Omega \mbox{ is a bounded open convex set with }|E|>0\right\}.
\end{equation}
Moreover,
the infimum is attained if and only if 
\[
\mathcal{M}(\Omega):=\Big\{x\in\Omega\, :\, B_{r_\Omega}(x)\subseteq\Omega\Big\}\not=\emptyset.
\]
\end{theorem}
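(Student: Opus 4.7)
The proof will hinge on an auxiliary scaling function. I would set
\[
F(r)=\inf\left\{h_q(C)\,:\,C\subset\mathbb{R}^2 \text{ bounded open convex with } r_C=r\right\},
\]
and observe that by rescaling $F(r)=r^{1-\frac{2}{q}}\,F(1)$ is strictly decreasing in $r$ (since $1-2/q<0$). The infimum $F(1)$ is attained by some shape $C_0$: for a minimizing sequence $\{C_n\}$ with $r_{C_n}=1$, the four--term geometric inequality from \cite{Bra} linking diameter, inradius, perimeter and area forces $\mathrm{diam}(C_n)$ to be uniformly bounded (since $q>1$ makes the ratio $h_q$ blow up like $\mathrm{diam}^{1-1/q}$ otherwise); Blaschke selection, combined with lower semicontinuity of perimeter and continuity of area and inradius under Hausdorff convergence, then produces $C_0$.

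The relaxation formula \eqref{relaxed} is the starting point: for a generic smooth $E\Subset\Omega$, the observation from the Introduction that for $q>1$ disjoint unions strictly increase the ratio reduces us to a single connected component; replacing $E$ by its convex hull (which stays in $\Omega$ by convexity, and which in dimension two decreases perimeter while increasing area) reduces further to convex sets, while the smooth inner approximation of Appendix \ref{sec:B} gives the reverse inequality. In particular, any bounded open convex $E\subseteq\Omega$ has $r_E\le r_\Omega$, so
\[
h_q(E)\,\ge\, F(r_E)\,\ge\, F(r_\Omega)
\]
by monotonicity of $F$, yielding $h_q(\Omega)\ge F(r_\Omega)$.

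For existence when $\mathcal{M}(\Omega)\ne \emptyset$, there are two cases. If $\Omega$ is bounded, I would apply Blaschke selection directly to a minimizing sequence of convex subsets (with diameters bounded by the same four--term argument); the Hausdorff limit $E_\infty$ satisfies $\mathrm{int}(E_\infty)\subseteq \mathrm{int}(\overline\Omega)=\Omega$, has positive area, and realizes the infimum. If $\Omega$ is unbounded, Lemma \ref{lemma:tubi} gives, up to a rigid motion, $[0,+\infty)\times(-r_\Omega,r_\Omega)\subseteq\Omega$; the scaled extremal $r_\Omega C_0$ has minimal width exactly $2r_\Omega$, so orienting it so that this minimal-width direction is vertical and translating far to the right lodges it into this half-strip, producing a bounded open convex subset of $\Omega$ with $h_q=F(r_\Omega)$.

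For non-existence when $\mathcal{M}(\Omega)=\emptyset$, the set $\Omega$ is then unbounded and, by Lemma \ref{lemma:unbounded}, up to a rigid motion, $\Omega=\{(x_1,x_2):-r_\Omega<x_2<r_\Omega,\,x_1>f(x_2)\}$ with $f$ convex blowing up at (at least) one endpoint. Given any bounded convex $C$ with $r_C=r<r_\Omega$, I can rotate it so its minimal width $2r$ is vertical, and translate horizontally far to the right; since $f$ is bounded on each compact subinterval of $(-r_\Omega,r_\Omega)$, such a translate lies inside $\Omega$. This yields $h_q(\Omega)\le F(r)$ for every $r<r_\Omega$, hence $h_q(\Omega)=F(r_\Omega)$ by continuity. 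Now, if some bounded convex $E^*\subseteq\Omega$ attained the infimum, its largest inscribed ball would force $r_{E^*}<r_\Omega$ (otherwise $\mathcal{M}(\Omega)\ne \emptyset$), and strict monotonicity of $F$ would give $h_q(E^*)\ge F(r_{E^*})>F(r_\Omega)=h_q(\Omega)$, contradicting minimality. The technically most delicate point is the \emph{placement lemma} underlying these last two steps—rigidly moving an arbitrary bounded convex shape of subcritical inradius into $\Omega$—which relies crucially on the precise structural descriptions of Lemmas \ref{lemma:tubi} and \ref{lemma:unbounded}, and on the fact that rotating $C$ to minimize its vertical extent is enough to fit it through any ``neck'' of $\Omega$ of width $>2r_C$.
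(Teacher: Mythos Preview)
Your architecture via the auxiliary function $F(r)=\inf\{\mathcal{H}^1(\partial C)/|C|^{1/q}:r_C=r\}$ is genuinely different from the paper's direct approach, and the relaxation to convex competitors (Parts~1--2) matches the paper. However, there is a real gap that you yourself flag as ``the technically most delicate point'': you repeatedly assume that a bounded convex set $C$ with inradius $r_C$ can be rotated to have vertical extent $2r_C$, i.e.\ that its \emph{minimal width} equals $2r_C$. This is false in general---an equilateral triangle with inradius $1$ has minimal width $3$---and you give no argument for why the abstract minimizer $C_0$ of $F(1)$ should enjoy this property. Without it, you cannot lodge $r_\Omega C_0$ into the half-strip $[0,\infty)\times(-r_\Omega,r_\Omega)$ (which has width \emph{exactly} $2r_\Omega$), so your unbounded existence case collapses; and in the non-existence case you cannot place scaled copies $rC_0$ for all $r<r_\Omega$, so the identity $h_q(\Omega)=F(r_\Omega)$ on which your contradiction rests is unproven. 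Establishing that $C_0$ has minimal width $2$ is essentially equivalent to solving the generalized Cheeger problem for the infinite strip, which is a special case of the theorem you are trying to prove.

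The paper sidesteps this entirely by never introducing $F(r)$. For existence in the unbounded case, it works directly with a minimizing sequence $\{E_n\}\subseteq\Omega$: the four-term inequality bounds $\mathrm{diam}(E_n)$, and since each $E_n$ \emph{already} sits in the strip $\mathbb{R}\times(-r_\Omega,r_\Omega)$, a horizontal translate lands in a fixed bounded box inside $\Omega$, after which Hausdorff compactness applies---no placement of an abstract extremal is needed. For non-existence, the paper's argument is much shorter than yours: given a putative optimizer $E\subseteq\Omega$, the epigraph structure from Lemma~\ref{lemma:unbounded} lets one translate $E$ rightward until it is compactly contained, then dilate by some $t>1$ while staying in $\Omega$; the strict scaling $t^{1-2/q}<1$ of the ratio contradicts minimality. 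No computation of $h_q(\Omega)$ is required.
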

\begin{proof}
We divide the proof in 4 parts, for ease of readability.
\vskip.2cm\noindent
{\bf Part 1: reduction to connected sets.} Let us take an open set $E\Subset \Omega$ with smooth boundary and let us suppose that $E=E_1 \cup E_2$, with $E_1$ and $E_2$ disjoint. Without loss of generality, we can suppose that 
\[
\frac{\mathcal{H}^{1}(\partial E_1)}{|E_1|^\frac{1}{q}}\le \frac{\mathcal{H}^{1}(\partial E_2)}{|E_2|^\frac{1}{q}}.
\]
By using the subadditivity of the concave map $\tau\mapsto \tau^{1/q}$ and Lemma \ref{lemma:ulanbator} with $\beta=1$, we have
\[
\begin{split}
\frac{\mathcal{H}^{1}(\partial E_1)+\mathcal{H}^{1}(\partial E_2)}{\left(|E_1|+|E_2|\right)^\frac{1}{q}}&>\frac{\mathcal{H}^{1}(\partial E_1)+\mathcal{H}^{1}(\partial E_2)}{|E_1|^\frac{1}{q}+|E_2|^\frac{1}{q}}\\
&>\min\left\{\frac{\mathcal{H}^{1}(\partial E_1)}{|E_1|^\frac{1}{q}},\frac{\mathcal{H}^{1}(\partial E_2)}{|E_2|^\frac{1}{q}}\right\} =\frac{\mathcal{H}^{1}(\partial E_1)}{|E_1|^\frac{1}{q}}.
\end{split}
\]
This estimate guarantees that we can restrict the minimization to {\it connected} open sets with smooth boundary $E\Subset\Omega$.
\vskip.2cm\noindent
{\bf Part 2: reduction to convex sets.} As already observed in \cite[Theorem 1.2]{PS}, if we take an open connected set $E\Subset \Omega$ with smooth boundary, then the convex hull $E^{\rm ch}$ of $E$ is such that
\[
|E^{\rm ch}|\ge |E|\qquad \mbox{ and }\qquad \mathcal{H}^{1}(\partial E^{\rm ch})\le \mathcal{H}^{1}(\partial E).
\]
In the latter, we crucially used that we are in dimension $N=2$. See for example \cite[Proposition 5]{FF} for a proof of this result in the wider context of finite perimeter sets. This shows that 
\[
h_q(\Omega)\ge \inf\left\{\frac{\mathcal{H}^{1}(\partial E)}{|E|^\frac{1}{q}}\, :\, E\subseteq\Omega \mbox{ is a bounded open convex set with }|E|>0\right\}.
\]
In order to prove the reverse equality, we take an admissible convex set $E\subseteq\Omega$. We fix $x_0\in\Omega$ and for every $t\in(0,1)$ define the rescaled set $E_t=t\,(E-x_0)+x_0$. By construction, we have $E_t\Subset \Omega$, for every $t\in (0,1)$.
We now use that for the open convex set $E_t$, there exists a sequence $\{E_{t,n}\}_{n\ge n_0}$ of smooth open convex sets, such that 
\[
\lim_{n\to\infty} |E_{t,n}|=|E_t|\qquad \mbox{ and }\qquad \lim_{n\to\infty} \mathcal{H}^1(\partial E_{t,n})=\mathcal{H}^1(\partial E_t).
\]
Moreover, since $E_t\Subset\Omega$, this sequence can be constructed so that $E_{t,n}\Subset\Omega$ for $n$ large enough (depending on $t$), see Lemma \ref{lm:approximation} below for these properties, for example. Thus, we obtain
\[
h_q(\Omega)\le \lim_{n\to\infty}\frac{\mathcal{H}^{1}(\partial E_{t,n})}{|E_{t,n}|^\frac{1}{q}}=\frac{\mathcal{H}^{1}(\partial E_t)}{|E_t|^\frac{1}{q}}=t^{1-\frac{2}{q}}\,\frac{\mathcal{H}^{1}(\partial E)}{|E|^\frac{1}{q}}.
\]
By arbitrariness of both $0<t<1$ and $E$, this is enough to get \eqref{relaxed}.
\vskip.2cm\noindent
\vskip.2cm\noindent
{\bf Part 3: existence.} We do not assume that $\Omega$ is bounded, but only that
$\Omega$ contains at least a ball of maximal radius $r_\Omega$. We recall that $\mathcal{M}(\Omega)$ is a convex closed set, with empty interior. 
We have two distinguish two cases:
\begin{itemize}
\item[(i)] either $\mathcal{M}(\Omega)$ is bounded;
\vskip.2cm
\item[ (ii)] or $\mathcal{M}(\Omega)$ is unbounded.
\end{itemize}
In case (i), it is not difficult to see that $\Omega$ must be bounded, as well. Indeed, suppose by contradiction that $\Omega$ is unbounded. From Lemma \ref{lemma:palleinfinite}, we could infer existence of a infinite sequence of disks $B_{r_\Omega}(x_n)$ with centers diverging at infinity, all contained in $\Omega$. Thus, $\{x_n\}_{n\in\mathbb{N}}\subseteq\mathcal{M}(\Omega)$ and this would violate the boundedness of $\mathcal{M}(\Omega)$.
\par
If $\Omega$ is bounded, existence can be proved by using the Direct Method in a standard way. We take $\{E_n\}_{n\in\mathbb{N}}$ a minimizing sequence made of convex sets contained in $\Omega$, such that $|E_n|>0$ for every $n\in\mathbb{N}$. We can assume that 
\begin{equation}
\label{minimizing}
\frac{\mathcal{H}^{1}(\partial E_n)}{|E_n|^\frac{1}{q}}<h_q(\Omega)+\frac{1}{n+1},\qquad \mbox{ for every } n\in\mathbb{N}. 
\end{equation}
In particular, the leftmost quantity is uniformly bounded by $h_q(\Omega)+1$. By applying the Isoperimetric Inequality to each $E_n$, we get
\begin{equation}
\label{lowerbound_mass}
\frac{2\,\sqrt{\pi}}{h_q(\Omega)+1}<|E_n|^{\frac{1}{q}-\frac{1}{2}},\qquad \mbox{ for every } n\in\mathbb{N}.
\end{equation}
Thanks to the fact that $q<2$, this gives a uniform lower bound on $|E_n|$. 
We now observe that, from the sequence of open sets $\{E_n\}_{n\in\mathbb{N}}$ contained in the compact set $\overline\Omega$, we can extract a subsequence (not relabeled) that converges with respect to the Hausdorff complementary topology\footnote{We recall that this means that
\[
\lim_{n\to\infty} \max\left\{\max_{x\in \overline\Omega\setminus E_n} \mathrm{dist}(x,\overline\Omega\setminus E),\,\max_{x\in \overline\Omega\setminus E} \mathrm{dist}(x,\overline\Omega\setminus E_n)\right\}=0,
\]
see for example \cite[D\'efinition 2.2.7]{HP}.} to an open set $E\subseteq \overline\Omega$ (see \cite[Corollaire 2.2.24]{HP}). 
Moreover, by \cite[point 8, page 33]{HP} we know that $E$ is still convex. We also observe that for convex sets, the Hausdorff convergence implies the convergence in the sense of characteristic functions, as well. This means that we have 
\[
\lim_{n\to\infty} \|1_{E_n}-1_E\|_{L^1(\Omega)}=0,
\] 
and thus by \cite[Proposition 2.3.6]{HP} we get 
\[
\lim_{n\to\infty} |E_n|=|E|\qquad \mbox{ and }\qquad \mathcal{H}^1(\partial E)=|\nabla 1_{E}|(\mathbb{R}^N)\le \liminf_{n\to\infty} |\nabla 1_{E_n}|(\mathbb{R}^N)=\liminf_{n\to\infty} \mathcal{H}^{1}(\partial E_n).
\]
The first fact, in conjunction with \eqref{lowerbound_mass}, shows in particular that $|E|>0$. From \eqref{minimizing} and the previous limits, we eventually get that $E$ must be a minimizer. 
\vskip.2cm\noindent
Let us consider the case (ii). In this case $\Omega$ is unbounded, as well. By Lemma \ref{lemma:tubi}, we know that 
\[
[0,+\infty)\times(-r_\Omega,r_\Omega)\subseteq \Omega\subseteq \mathbb{R}\times(-r_\Omega,r_\Omega),
\]
up to a rigid movement.
Once we gained this geometric information on $\Omega$, we can proceed to prove existence of an optimal set. We start by taking a minimizing sequence $\{E_n\}_{n\in\mathbb{N}}$ of open convex sets as above. We still have the bounds \eqref{minimizing} and \eqref{lowerbound_mass}. 
We need to infer some compactness on $\{E_n\}_{n\in\mathbb{N}}$, by paying attention to the fact that now $\Omega$ is unbounded. Here we will crucially exploit that $1<q<2$. Indeed, this assumption enables us to use the following ``four terms geometric inequality''
\[
C_1\,\mathrm{diam}(E_n)\le \Big(r_{E_n}\Big)^\frac{1}{q-1}\,\left(\frac{\mathcal{H}^1(\partial E_n)}{|E_n|^\frac{1}{q}}\right)^\frac{q}{q-1},
\]
for a constant $C_1=C_1(q)>0$ which degenerates as $q\nearrow 2$, see \cite[Proposition B.6]{Bra}. By using that $r_{E_n}\le r_\Omega<+\infty$, the fact that $1<q<2$ and \eqref{minimizing}, we finally obtain that 
\[
\mathrm{diam}(E_n)\le C_2,\qquad \mbox{ for every } n\in\mathbb{N},
\]
with $C_2$ not depending on $n$. In particular, by \eqref{salsiccia}, we get that $\{E_n\}_{n\in\mathbb{N}}$ is a sequence of convex sets with equi-bounded diameters contained in the strip $\mathbb{R}\times(-r_\Omega,r_\Omega)$. Thus, for every $n\in\mathbb{N}$, there exists a translated copy $\widetilde{E}_n$ of $E_n$ such that
\[
\widetilde{E}_n\subseteq (0,C_2)\times(-r_\Omega,r_\Omega)\subseteq \Omega,
\]
where we used again \eqref{salsiccia}. We can now repeat the argument of the case {\it (i)} above, applied to the sequence $\{\widetilde{E}_n\}_{n\in\mathbb{N}}$, to infer existence.
\vskip.2cm\noindent
{\bf Step 4: non-existence.} We are left with proving that if $\mathcal{M}(\Omega)=\emptyset$, then the minimization problem does not admit a solution. We first observe that this condition assures that $\Omega$ is unbounded. According to Lemma \ref{lemma:unbounded}, the set $\Omega$ has a very peculiar form: up to a rigid movement, it is given by
\[
\Omega=\Big\{(x_1,x_2)\in\mathbb{R}^2\,:\, -r_\Omega<x_2<r_\Omega\, \mbox{ and } x_1>f(x_2)\Big\},
\]
for some convex function $f:(-r_\Omega,r_\Omega)\to\mathbb{R}$, which blows-up to $+\infty$ at (at least) one of the two boundary points $x_2=r_\Omega$ or $x_2=-r_\Omega$.  Thus, if $E\subseteq\Omega$ would be an optimal set, we could translate it rightward and slightly scale it by a factor $t>1$, without exiting from $\Omega$ (see Figure \ref{fig:strisciona}). 
\begin{figure}
\includegraphics[scale=.25]{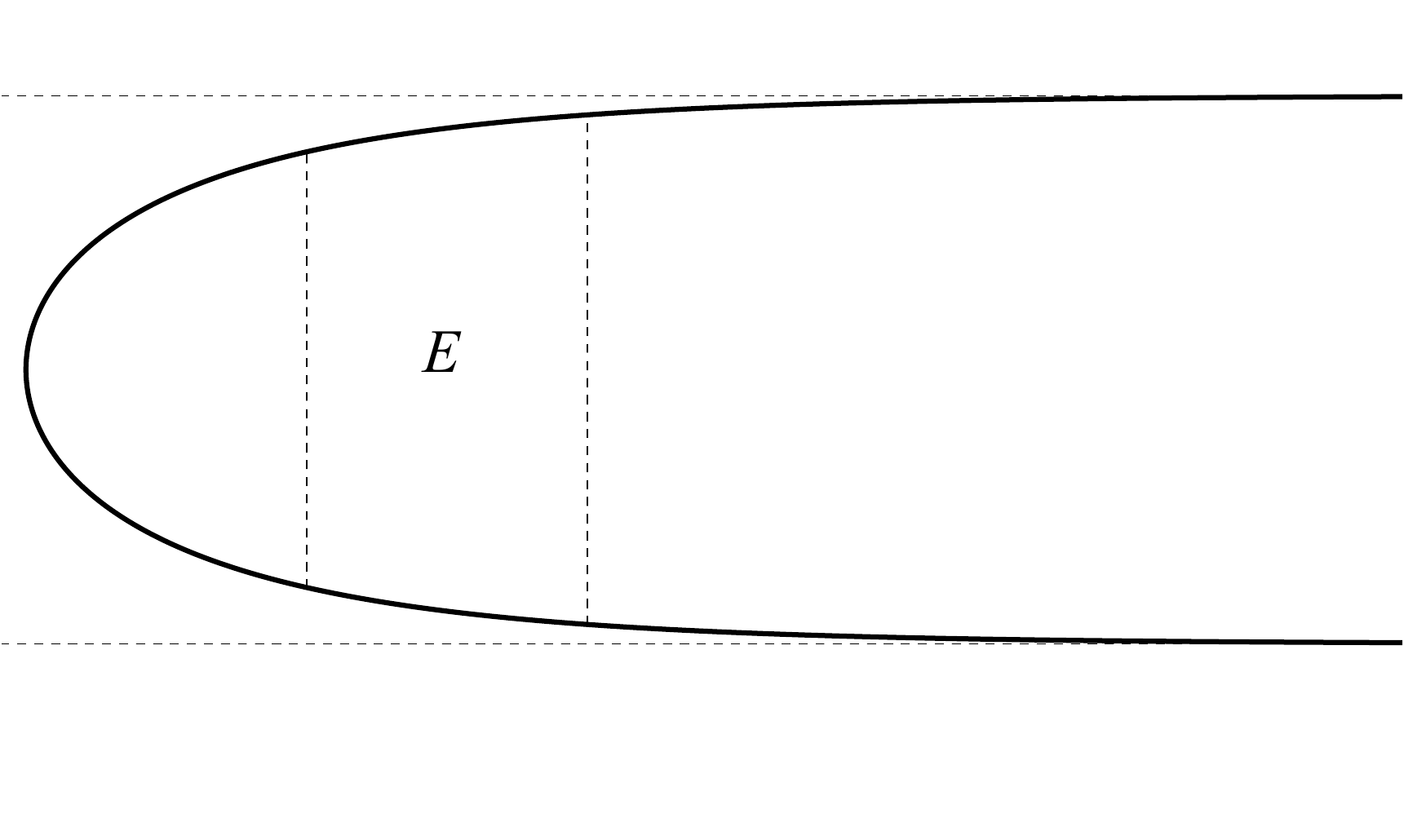}
\caption{An unbounded convex sets with finite inradius, such that the high ridge set $\mathcal{M}(\Omega)$ is empty. The subset $E\subseteq\Omega$ can be moved ``rightward'' and then ``inflated'', without exiting from $\Omega$.}
\label{fig:strisciona}
\end{figure}
By the scaling properties of the ratio $\mathcal{H}^1(\cdot)/|\cdot|^{1/q}$, this would violate the minimality of $E$. 
\end{proof}

\begin{remark}[Generalized principal frequencies]
\label{rem:simon}
The previous existence result for $h_q(\Omega)=\lambda_{1,q}(\Omega)$ is a bit surprising, since an analogous statement
{\it does not} hold for 
\[
\lambda_{p,q}(\Omega)=\inf_{u\in C^\infty_0(\Omega)}\left\{\int_\Omega |\nabla u|^p\,dx\, :\, \int_\Omega |u|^q=1\right\}=\inf_{u\in W^{1,p}_0(\Omega)}\left\{\int_\Omega |\nabla u|^p\,dx\, :\, \int_\Omega |u|^q=1\right\},
\]
with $1<p<q$. Indeed, let us consider the half-strip
\[
\Omega=(0,+\infty)\times(-1,1),
\]
for which we have $\mathcal{M}(\Omega)\not=\emptyset$.
We can show that this time the infimum value $\lambda_{p,q}(\Omega)$ {\it is not attained} in $W^{1,p}_0(\Omega)$. To see this, it is sufficient to notice that 
\[
\lambda_{p,q}(\Omega)=\lambda_{p,q}(\Omega+a\,\mathbf{e}_1),
\]
for every $a\in\mathbb{R}$, since sharp Poincar\'e-Sobolev constants are invariant by translations. On the other hand, for $a>0$ we have 
\[
\Omega+a\,\mathbf{e}_1\subsetneq \Omega.
\]
Assume that $u\in W^{1,p}_0(\Omega)$ is a minimizer for $\lambda_{p,q}(\Omega)$. Without loss of generality, we can assume this to be positive. Then 
\[
v(x,y)=u(x,y-a),
\] 
would be a positive minimizer for $\lambda_{p,q}(\Omega+a\,\mathbf{e}_1)$, as well. By extending $v$ to $0$ to $\Omega\setminus (\Omega+a\,\mathbf{e}_1)$, we would get 
\[
v\in W^{1,p}_0(\Omega),
\]
and
\[
\int_{\Omega} |\nabla v|^p\,dx=\lambda_{p,q}(\Omega)\, \left(\int_\Omega |v|^q\,dx\right)^\frac{p}{q}.
\]
This shows that $v$ is a positive minimizer for $\lambda_{p,q}(\Omega)$. By optimality, it must be a weak solution of
\[
-\Delta_p v=\lambda_{p,q}(\Omega)\,\|v\|_{L^q(\Omega)}^{p-q}\,v^{q-1},\qquad \mbox{ in }\Omega.
\]
In particular, $v$ is a weakly $p-$superhamonic function in $\Omega$, not identically vanishing. By the strong minimum principle, we get a contradiction, since $v$ is identically zero on the set
\[
\Omega\setminus (\Omega+a\,\mathbf{e}_1),
\]
which has positive measure.
\end{remark}
We conclude this section, by showing that for $0<q<1$ the situation abruptly changes. Indeed, for unbounded sets the problem $h_q(\Omega)$ {\it never} has a solution.
\begin{lemma}
\label{lemma:appu}
Let $\Omega\subseteq\mathbb{R}^2$ be an unbounded open convex set, with $r_\Omega<+\infty$. If $0<q<1$, then we have 
\[
h_q(\Omega)=0.
\]
\end{lemma}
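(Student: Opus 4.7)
The strategy is to produce a family of admissible smooth subsets of $\Omega$ whose Cheeger ratio goes to zero, exploiting the fact that $\Omega$ is unbounded but has finite inradius. The mechanism is exactly the ``stretching'' phenomenon observed in point (2) of the Introduction: cylindrical sets of fixed width and diverging length have vanishing ratio $\mathcal{H}^{N-1}(\partial \cdot)/|\cdot|^{1/q}$ when $0<q<1$.

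\textbf{Step 1: producing a long tube inside $\Omega$.} Fix any $0<r<r_\Omega$. By Lemma \ref{lemma:palleinfinite}, there exists a sequence $\{x_n\}_{n\in\mathbb{N}}\subseteq\Omega$, all lying on a common line, with $|x_n|\to+\infty$ and $B_r(x_n)\subseteq\Omega$. Up to a rigid motion we may suppose that this line is $\mathbb{R}\times\{0\}$. By the convexity of $\Omega$, the convex hull of any pair $B_r(x_n)\cup B_r(x_m)$ is contained in $\Omega$; picking $n$ and $m$ with $|x_n-x_m|$ as large as we please, we see that for every $L>0$ there exists a rectangular tube
\[
T_L:=(a_L,a_L+L)\times(-r,r)\subseteq \Omega,
\]
for some $a_L\in\mathbb{R}$.

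\textbf{Step 2: an inscribed smooth set.} Fix $0<\varrho<r$. For every $L>0$, consider the ellipse
\[
\mathcal{E}_L=\left\{(x_1,x_2)\in\mathbb{R}^2\, :\, \frac{(x_1-a_L-L/2)^2}{(L/2)^2}+\frac{x_2^2}{\varrho^2}<1\right\}.
\]
Since $\varrho<r$, we have $\overline{\mathcal{E}_L}\subseteq T_L\subseteq\Omega$ and $\overline{\mathcal{E}_L}$ is compact, so $\mathcal{E}_L\Subset\Omega$; the boundary $\partial\mathcal{E}_L$ is smooth. A direct computation gives $|\mathcal{E}_L|=\pi\,\varrho\,L/2$, while parameterizing $\partial\mathcal{E}_L$ and using $\sqrt{a^2+b^2}\le |a|+|b|$ yields
\[
\mathcal{H}^1(\partial\mathcal{E}_L)=\int_0^{2\pi}\sqrt{(L/2)^2\sin^2\vartheta+\varrho^2\cos^2\vartheta}\,d\vartheta\le 2L+4\varrho.
\]

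\textbf{Step 3: conclusion.} By the very definition of $h_q(\Omega)$ we get
\[
h_q(\Omega)\le \frac{\mathcal{H}^1(\partial \mathcal{E}_L)}{|\mathcal{E}_L|^{1/q}}\le \frac{2L+4\varrho}{(\pi\,\varrho\,L/2)^{1/q}}=C(\varrho,q)\,L^{1-\frac{1}{q}}+C'(\varrho,q)\,L^{-\frac{1}{q}},
\]
for suitable constants depending only on $\varrho$ and $q$. Since $0<q<1$ gives $1-1/q<0$, letting $L\to+\infty$ forces $h_q(\Omega)=0$.

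The whole argument is structurally simple; the only place where care is needed is Step 1, i.e.\ translating the unboundedness of $\Omega$ and the finiteness of $r_\Omega$ into the existence of arbitrarily long rectangular tubes inside $\Omega$. Fortunately, Lemma \ref{lemma:palleinfinite} (applied with any $r<r_\Omega$) does exactly this, and convexity does the rest.
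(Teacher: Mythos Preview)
Your proof is correct and follows essentially the same strategy as the paper: invoke Lemma~\ref{lemma:palleinfinite} together with convexity to show that $\Omega$ contains arbitrarily long tubes, and then insert explicit elongated test sets (you use ellipses, the paper uses stadia, i.e.\ convex hulls of two disks $B_r(x_0)$ and $B_r(x_n)$) whose ratio vanishes because $1/q>1$. One tiny imprecision: the endpoints of the major axis of $\mathcal{E}_L$ lie on $\partial T_L$, so $\overline{\mathcal{E}_L}\not\subseteq T_L$ as written; taking semi-major axis $(L-\varepsilon)/2$, or simply noting that these endpoints are centers of disks $B_r\subseteq\Omega$ and hence belong to $\Omega$, fixes this immediately.
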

\begin{proof}
By appealing to Lemma \ref{lemma:palleinfinite}, there exists a sequence of disks $B_{r}(x_n)\subseteq \Omega$ with fixed radius and centers diverging at infinity. In particular, by taking as $E_n$ the convex hull of $B_{r}(x_0)$ and $B_{r}(x_n)$, we get
\[
h_q(\Omega)\le \lim_{n\to\infty}\frac{\mathcal{H}^1(E_n)}{|E_n|^\frac{1}{q}}=\lim_{n\to\infty}\frac{2\,\pi\,r+2\,|x_0-x_n|}{(\pi\,r^2+2\,r\,|x_0-x_n|)^\frac{1}{q}}=0,
\]
thanks to the fact that $1/q>1$. Thus, if $\Omega$ is an unbounded open set, we have $h_q(\Omega)=0$. Accordingly, we can not have existence of an optimal set.
\end{proof}

\appendix

\section{An a priori local $L^\infty$ bound}
\label{sec:A}

In what follows, for $1\le p<N$ we will indicate by $T_{N,p}$ the sharp constant in the Sobolev inequality, that is
\[
T_{N,p}=\sup_{\varphi\in C^\infty_0(\mathbb{R}^N)} \Big\{\|\varphi\|_{L^{p^*}(\mathbb{R}^N)}\, :\, \|\nabla \varphi\|_{L^p(\mathbb{R}^N)}=1\Big\}.
\]
Its explicit expression can be found for example in \cite[equation (2)]{Ta}. For our scopes, it is useful to recall that 
\begin{equation}
\label{costante_talenti}
\lim_{p\searrow 1} T_{N,p}=T_{N,1}=\frac{1}{N\,\omega_{N}^\frac{1}{N}},
\end{equation}
where on the right-hand side we can recognize the reciprocal of the sharp Euclidean isoperimetric constant. The following result is well-known, but we need to keep track of the relevant constant, as $p$ goes to $1$.
\begin{proposition}
\label{prop:Linftybound}
Let $1<p<2$ and let $u\in W^{1,p}_{\rm loc}(\mathbb{R}^N)\cap L^\infty_{\rm loc}(\mathbb{R}^N)$ be a non-negative local weak subsolution of 
\[
-\Delta_p u=1,\qquad \mbox{ in }\mathbb{R}^N.
\]
Then, for every $p\le q\le p^*$ there exists a constant $\mathcal{C}_{N,p,q}>0$ such that for every cube $Q_{R_0}(x_0)$ we have 
\[
\|u\|_{L^\infty(Q_{R_0/2}(x_0))}\le \mathcal{C}_{N,p,q}\,\left[\left(\fint_{Q_R(x_0)} |u|^q\,dx\right)^\frac{1}{q}+R_0^\frac{p}{p-1}\right].
\]
Moreover, for every fixed $1<q<N/(N-1)$ the constant $\mathcal{C}_{N,p,q}>0$ has a finite positive limit $\mathcal{C}_{N,1,q}$, as $p$ goes to $1$.
\end{proposition}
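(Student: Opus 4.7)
The plan is to run Moser iteration on $-\Delta_p u \le 1$, tracking the dependence on $p$ carefully enough to send $p \searrow 1$ in the end.

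To neutralize the inhomogeneous right-hand side, I would work with $w := u + R_0^{p/(p-1)}$: it still satisfies $-\Delta_p w \le 1$ weakly and enjoys the pointwise lower bound $w \ge R_0^{p/(p-1)}$. Testing this inequality with $\varphi = \eta^p w^\beta$ (for $\beta \ge 1$ and a Lipschitz cutoff $\eta$ supported in $Q_R \subset Q_{R_0}$) and absorbing the cross term via Young's inequality yields a Caccioppoli-type estimate
\[
\int \eta^p w^{\beta-1}|\nabla w|^p\,dx \le \frac{C(\beta,p)}{(R-r)^p}\int_{Q_R} w^{\beta+p-1}\,dx + \frac{C}{\beta}\int_{Q_R}\eta^p w^\beta\,dx,
\]
and the pointwise lower bound $w \ge R_0^{p/(p-1)}$ gives $w^\beta \le R_0^{-p}\,w^{\beta+p-1}$, so the last term is absorbed into the first.

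Setting $v := w^{(\beta+p-1)/p}$, so that $\eta^p w^{\beta-1}|\nabla w|^p \sim \beta^{-p}\,\eta^p|\nabla v|^p$, the Sobolev inequality applied to $\eta v$ with sharp constant $T_{N,p}$ converts this into a reverse-H\"older inequality
\[
\Big(\int_{Q_r} w^{\kappa(\beta+p-1)}\,dx\Big)^{p/p^*} \le \frac{C(\beta,p)\,T_{N,p}^p}{(R-r)^p}\int_{Q_R} w^{\beta+p-1}\,dx, \qquad \kappa := \frac{p^*}{p}=\frac{N}{N-p}.
\]
With exponents $q_k := q\,\kappa^k$ and dyadic radii $R_k := R_0/2 + R_0/2^{k+1}$, a standard bootstrap produces
\[
\|w\|_{L^\infty(Q_{R_0/2})} \le \Big(\prod_{k=0}^{\infty}\Big[\tfrac{C(q_k,p)\,T_{N,p}^p\,2^{pk}}{R_0^p}\Big]^{1/q_k}\Big)\,\|w\|_{L^q(Q_{R_0})},
\]
and the product converges since $q_k$ grows geometrically and $C(q_k,p)$ grows only polynomially in $q_k$. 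Rewriting in terms of averages and bounding $\|w\|_{L^q(Q_{R_0})} \le \|u\|_{L^q(Q_{R_0})} + C\,R_0^{p/(p-1)+N/q}$ yields the stated inequality with an explicit $\mathcal{C}_{N,p,q}$.

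The main obstacle, and the whole reason to be careful, is the limit $p \searrow 1$. For $1 < q < N/(N-1)$ fixed, the admissibility window $p \le q \le p^*$ is nonempty for all $p$ sufficiently close to $1$, and the ratio $\kappa = N/(N-p) \to N/(N-1)$ stays uniformly bounded away from $1$, so $\sum 1/q_k$ is dominated by a convergent geometric series uniformly in $p$. The Sobolev constant $T_{N,p}$ converges to $T_{N,1} = (N\,\omega_N^{1/N})^{-1}$ by \eqref{costante_talenti}, and the Caccioppoli factors $C(\beta,p)$ depend continuously on $p$. What really needs checking is that these factors are uniformly polynomially bounded in $\beta$ as $p$ varies in a neighborhood of $1$: this forces a careful bookkeeping of the Young inequality constants in the Caccioppoli step, so that the infinite product has a strictly positive finite limit $\mathcal{C}_{N,1,q}$.
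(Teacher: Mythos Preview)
Your proposal is correct and follows essentially the same approach as the paper: Moser iteration with the shifted function $u+t$ and $t=R_0^{p/(p-1)}$, the test function $\eta^p(u+t)^\beta$, absorption of the inhomogeneous term via the lower bound $(u+t)^\beta\le t^{1-p}(u+t)^{\beta+p-1}$, the Sobolev inequality with sharp constant $T_{N,p}$, and a geometric iteration on exponents $q\,(p^*/p)^k$ and dyadic radii. The only cosmetic difference is that the paper keeps $t$ free until after the iteration step is set up and then fixes it, whereas you commit to $t=R_0^{p/(p-1)}$ from the outset.
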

\begin{proof}
We will use the standard Moser iteration technique, by paying due attention to the constants appearing in the estimates, exactly as in \cite[Lemma A.1]{BFR}.
We fix $R_0/2\le r<R\le R_0$ and a pair of concentric cubes $Q_r(x_0)\subseteq Q_R(x_0)$. For simplicity, from now on we will omit indicating the center $x_0$. We take $\eta$ to be a standard Lipschitz cut-off function, such that
\[
0\le \eta\le 1,\qquad \eta\equiv 1 \mbox{ on } Q_r,\qquad \eta\equiv 0 \mbox{ on } \mathbb{R}^N\setminus Q_R,
\]
and
\[
\|\nabla \eta\|_{L^\infty}= \frac{1}{R-r}.
\]
We then use the test function
\[
\varphi=\eta^p\,(u+t)^\beta,
\]
where $t>0$ and $\beta\ge 1$. We get
\[
\begin{split}
\beta\,\int \left|\nabla u\right|^p\,(u+t)^{\beta-1}\,\eta^p\,dx&+p\,\int \langle |\nabla u|^{p-2}\,\nabla u,\nabla \eta\rangle\,(u+t)^{\beta-1}\,\eta^{p-1}\,dx\\
&\le \int \eta^p\,(u+t)^{\beta}\,dx.
\end{split}
\]
Observe that by Young's inequality, we have for every $\delta>0$
\[
\begin{split}
p\,\int \langle |\nabla u|^{p-2}\,\nabla u,\nabla \eta\rangle\,(u+t)^{\beta-1}\,\eta^{p-1}\,dx&\ge -(p-1)\,\delta\,\int |\nabla u|^p\,(u+t)^{\beta-1}\,\eta^p\,dx\\
&-\delta^{1-p}\,\int |\nabla \eta|^p\,(u+t)^{\beta+p-1}\,dx.
\end{split}
\]
In particular, by choosing $\delta=\beta/p$ we get
\[
\begin{split}
\frac{\beta}{p}\,\int \left|\nabla u\right|^p\,(u+t)^{\beta-1}\,\eta^p\,dx&\le \left(\frac{p}{\beta}\right)^{p-1}\,\int |\nabla \eta|^p\,(u+t)^{\beta+p-1}\,dx+\int \eta^p\,(u+t)^{\beta}\,dx.
\end{split}
\]
We now observe that
\[
\left|\nabla u\right|^p\,(u+t)^{\beta-1}=\left(\frac{p}{\beta+p-1}\right)^p\,\left|\nabla (u+t)^\frac{\beta+p-1}{p}\right|^p,
\]
and
\[
(u+t)^{\beta}\le (u+t)^{\beta+p-1}\,\frac{1}{t^{p-1}}.
\]
Thus, from the previous estimate, we obtain
\[
\begin{split}
\int \left|\nabla (u+t)^\frac{\beta+p-1}{p}\right|^p\,\eta^p\,dx&\le \left(\frac{\beta+p-1}{\beta}\right)^p\,\int |\nabla \eta|^p\,(u+t)^{\beta+p-1}\,dx\\
&+\left(\frac{\beta+p-1}{p}\right)^p\,\frac{p}{\beta}\,\frac{1}{t^{p-1}}\,\int \eta^p\,(u+t)^{\beta+p-1}\,dx.
\end{split}
\] 
In order to simplify a bit the estimate, we use that
\[
\left(\frac{\beta+p-1}{\beta}\right)^p\le \left(\frac{\beta+p-1}{p}\right)^{p-1}\,p,
\]
and
\[
\left(\frac{\beta+p-1}{p}\right)^p\,\frac{p}{\beta}\le \left(\frac{\beta+p-1}{p}\right)^{p-1}\,p.
\]
This yields
\begin{equation}
\label{ciaone}
\int \left|\nabla (u+t)^\frac{\beta+p-1}{p}\right|^p\,\eta^p\,dx\le \left(\frac{\beta+p-1}{p}\right)^{p-1}\,p\,\int \left[|\nabla \eta|^p+\frac{\eta^p}{t^{p-1}}\right]\,(u+t)^{\beta+p-1}\,dx.
\end{equation}
By combining Minkowski's inequality and \eqref{ciaone}, we can infer
\[
\begin{split}
\left(\int \left|\nabla \left((u+t)^\frac{p+\beta-1}{p}\,\eta\right)\right|^p\,dx\right)^\frac{1}{p}&\le \left(\int \left|\nabla (u+t)^\frac{\beta+p-1}{p}\right|^p\,\eta^p\,dx\right)^\frac{1}{p}\\
&+\left(\int |\nabla \eta|^p\,(u+t)^{p+\beta-1}\,dx\right)^\frac{1}{p}\\
&\le \left(\frac{\beta+p-1}{p}\right)^\frac{p-1}{p}\,p^\frac{1}{p}\,\left(\int \left[|\nabla \eta|^p+\frac{\eta^p}{t^{p-1}}\right]\,(u+t)^{\beta+p-1}\,dx\right)^\frac{1}{p}\\
&+\left(\int |\nabla \eta|^p\,(u+t)^{p+\beta-1}\,dx\right)^\frac{1}{p}.
\end{split}
\]
We can bound from below the leftmost integral by using Sobolev's inequality. This yields
\[
\begin{split}
\frac{1}{T_{N,p}}&\,\left(\int \left((u+t)^\frac{p+\beta-1}{p}\,\eta\right)^{p^*}\,dx\right)^\frac{1}{p^*}\\
&\le \left(\frac{\beta+p-1}{p}\right)^\frac{p-1}{p}\,p^\frac{1}{p}\,\left(\int \left[|\nabla \eta|^p+\frac{\eta^p}{t^{p-1}}\right]\,(u+t)^{\beta+p-1}\,dx\right)^\frac{1}{p}\\
&+\left(\int |\nabla \eta|^p\,(u+t)^{p+\beta-1}\,dx\right)^\frac{1}{p}.
\end{split}
\]
It is now time to use the properties of $\eta$. These lead us to 
\begin{equation}
\label{quasipronti}
\begin{split}
\frac{1}{T_{N,p}}&\,\left(\int_{Q_r} \left((u+t)^\frac{p+\beta-1}{p}\right)^{p^*}\,dx\right)^\frac{1}{p^*}\\
&\le \left(\frac{\beta+p-1}{p}\right)^\frac{p-1}{p}\,p^\frac{1}{p}\, \left[\frac{1}{(R-r)^p}+\frac{1}{t^{p-1}}\right]^\frac{1}{p}\, \left(\int_{Q_R} (u+t)^{\beta+p-1}\,dx\right)^\frac{1}{p}\\
&+\frac{1}{R-r}\,\left(\int_{Q_R}(u+t)^{p+\beta-1}\,dx\right)^\frac{1}{p}.
\end{split}
\end{equation}
We also observe that 
\[
\frac{1}{R-r}\le \left(\frac{\beta+p-1}{p}\right)^\frac{p-1}{p}\,p^\frac{1}{p}\, \left[\frac{1}{(R-r)^p}+\frac{1}{t^{p-1}}\right]^\frac{1}{p}.
\]
By using this elementary observation, from \eqref{quasipronti} we get
\[
\begin{split}
&\left(\int_{Q_r} \left((u+t)^\frac{p+\beta-1}{p}\right)^{p^*}\,dx\right)^\frac{1}{p^*}\\
&\le 2\,T_{N,p}\,\left(\frac{\beta+p-1}{p}\right)^\frac{p-1}{p}\,p^\frac{1}{p}\, \left[\frac{1}{(R-r)^p}+\frac{1}{t^{p-1}}\right]^\frac{1}{p}\, \left(\int_{Q_R} (u+t)^{\beta+p-1}\,dx\right)^\frac{1}{p}.
\end{split}
\]
We now set $\vartheta=(\beta+p-1)/p$ in the previous estimate and raise both sides to the power $1/\vartheta$. This gives
\begin{equation}
\label{quasiquasipronti}
\begin{split}
\|u+t\|_{L^{p^*\vartheta}(Q_{r})}
&\le \left(\vartheta^\frac{p-1}{p}\right)^\frac{1}{\vartheta}\,(2^p\,p\,T^p_{N,p})^\frac{1}{p\,\vartheta}\, \left[\frac{1}{(R-r)^p}+\frac{1}{t^{p-1}}\right]^\frac{1}{p\,\vartheta}\, \|u+t\|_{L^{p\,\vartheta}(Q_{R})}.
\end{split}
\end{equation}
We want to iterate the estimate \eqref{quasiquasipronti}, on a sequence of shrinking cubes. At this aim, we take $p\le q\le p^*$ and set
\[
\vartheta_0=\frac{q}{p},\qquad\vartheta_{i+1}=\frac{p^*}{p}\,\vartheta_i=\left(\frac{p^*}{p}\right)^{i+1}\,\frac{q}{p},\qquad i\in\mathbb{N},
\]
and 
\[
R_i=\frac{R_0}{2}+\frac{R_0}{2^i},\qquad i\in\mathbb{N},
\]
where $R_0$ has been fixed at the beginning. From \eqref{quasiquasipronti}, we get
\[
\begin{split}
\|u+t\|_{L^{p\,\vartheta_{i+1}}(Q_{R_{i+1}})}&\le \left(\vartheta_i^\frac{p-1}{p}\right)^\frac{1}{\vartheta_i}\,(2^p\,p\,T^p_{N,p})^\frac{1}{p\,\vartheta_i}\\
&\times\, \left[\left(\frac{2^{i+1}}{R_0}\right)^p+\frac{1}{t^{p-1}}\right]^\frac{1}{p\,\vartheta_i}\, \|u+t\|_{L^{p\,\vartheta_i}(Q_{R_i})}.
\end{split}
\]
We now choose the free parameter $t$: we take it to be
\begin{equation}
\label{t}
t=R_0^\frac{p}{p-1}.
\end{equation}
With simple manipulations, we then obtain
\[
\begin{split}
\|u+t\|_{L^{p\,\vartheta_{i+1}}(Q_{R_{i+1}})}&\le \left(\vartheta_i^\frac{p-1}{p}\right)^\frac{1}{\vartheta_i}\,\left(\frac{2^p\,p\,T^p_{N,p}}{R_0^p}\right)^\frac{1}{p\,\vartheta_i}\, 2^{\frac{i+2}{\vartheta_i}}\, \|u+t\|_{L^{p\,\vartheta_i}(Q_{R_i})}.
\end{split}
\]
We start from $i=0$ and iterate infinitely many times this estimate. By using that
\[
\frac{1}{p}\,\sum_{i=0}^\infty \frac{1}{\vartheta_i}=\frac{1}{q}\,\sum_{i=0}^\infty \left(\frac{p}{p^*}\right)^i=\frac{N}{q\,p},
\]
together with
\[
\begin{split}
\prod_{i=0}^\infty\left(\vartheta_i^\frac{p-1}{p}\right)^\frac{1}{\vartheta_i}&=\lim_{n\to\infty} \exp\left(\frac{p-1}{p}\,\sum_{i=0}^n \frac{1}{\vartheta_i}\,\log \vartheta_i\right)\\
&=\lim_{n\to\infty} \exp\left(\frac{p-1}{q}\,\sum_{i=0}^n \left(\frac{p}{p^*}\right)^{i}\,\left[i\,\log \frac{p^*}{p}+\log \frac{q}{p}\right]\right)\\
&=\exp\left(\frac{p-1}{q}\,\frac{N\,(N-p)}{p^2}\,\log\frac{p^*}{p}\right)\,\cdot\exp\left(\frac{N}{p}\,\frac{p-1}{q}\,\log\frac{q}{p}\right)=:\mathcal{A}_{N,p,q},
\end{split}
\]
and
\[
\begin{split}
\prod_{i=0}^\infty 2^{\frac{i+2}{\vartheta_i}}&=\lim_{n\to\infty} \exp\left(\sum_{i=0}^n\frac{i+2}{\vartheta_i}\,\log 2\right)\\
&=\exp\left(\frac{p}{q}\sum_{i=0}^\infty(i+2)\,\left(\frac{p}{p^*}\right)^i\,\log 2\right)\\
&\le \exp\left(\frac{p}{q}\sum_{i=0}^\infty (i+2)\,\left(1-\frac{1}{N}\right)^i\,\log 2\right)=:\mathcal{B}_{N,p,q},
\end{split}
\]
we finally get the estimate
\[
\|u+t\|_{L^\infty(Q_{R_0/2})}\le \mathcal{A}_{N,p,q}\,\mathcal{B}_{N,p,q}\,\frac{\Big(2^{p}\,p\,T^p_{N,p}\Big)^\frac{N}{p\,q}}{R_0^\frac{N}{q}}\,\left(\int_{Q_{R_0}} (u+t)^q\,dx\right)^\frac{1}{q}.
\]
In particular, by recalling that $u\ge 0$ and using Minkowski's inequality, we get
\[
\|u\|_{L^\infty(Q_{R_0/2})}\le 2^\frac{N}{q}\,\mathcal{A}_{N,p,q}\,\mathcal{B}_{N,p,q}\,\Big(2^{p}\,p\,T^p_{N,p}\Big)^\frac{N}{p\,q}\,\left[\left(\fint_{Q_R} |u|^q\,dx\right)^\frac{1}{q}+t\right].
\]
By recalling the choice \eqref{t} of $t$, we conclude. 
\par
Finally, we observe that for every fixed $1< q<N/(N-1)$, we can take $1<p<2$ such that $p\le q$. In particular, since $p>1$, we have $q<p^*$. Moreover, it holds
\[
\lim_{p\searrow 1}2^\frac{N}{q}\,\mathcal{A}_{N,p,q}\,\mathcal{B}_{N,p,q}\,\Big(2^p\,p\,T^p_{N,p}\Big)^\frac{N}{p\,q}=\mathcal{B}_{N,1,q}\,\left(\frac{4}{N\,\omega_N^\frac{1}{N}}\right)^\frac{N}{q}.
\]
In the last identity, we used the definitions of $\mathcal{A}_{N,p,q}$, $\mathcal{B}_{N,p,q}$ and \eqref{costante_talenti}.
\end{proof}
\begin{remark}
We observe that the constant $\mathcal{B}_{N,1,q}$ is given by
\[
\mathcal{B}_{N,1,q}=\exp\left(\frac{1}{q}\sum_{i=0}^\infty (i+2)\,\left(1-\frac{1}{N}\right)^i\,\log 2\right).
\]
This has the following asymptotic behaviour 
\[
\mathcal{B}_{N,1,q}\sim 2^\frac{N^2}{q},
\]
as the dimension $N$ goes to $\infty$.
\end{remark}

\section{A simple approximation lemma for convex sets}
\label{sec:B}

\begin{lemma}
\label{lm:approximation}
Let $N\ge 2$ and let $E\subseteq \mathbb{R}^N$ be an open bounded convex set. Then there exists a sequence of smooth open bounded convex sets $\{E_n\}_{n\ge n_0}\subseteq\mathbb{R}^N$ and a constant $C_E>0$, such that
\begin{equation}
\label{contiene}
\left(1-\frac{C_E}{n}\right)\,E\subseteq E_n\subseteq \left(1+\frac{C_E}{n}\right)\,E,\qquad \mbox{ for every } n\ge n_0.
\end{equation}
Moreover, we have 
\[
\lim_{n\to\infty} \mathcal{H}^{N-1}(\partial E_n)=\lim_{n\to\infty} \mathcal{H}^{N-1}(\partial E).
\]
\end{lemma}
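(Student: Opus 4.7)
The plan is to realize $E_n$ as the sublevel set $\{j_n<1\}$ of a mollified Minkowski functional of $E$, exploiting the fact that convolution with a smooth nonnegative kernel preserves convexity.

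First, since $E$ is open, convex and nonempty, after translation I may assume $0\in E$ and $B_{r_0}\subseteq E$ for some $r_0>0$; this affects neither convexity nor the perimeter. Let
\[
j_E(x)=\inf\{t>0:x/t\in E\},\qquad x\in\mathbb{R}^N,
\]
be the Minkowski functional, so that $E=\{j_E<1\}$. Using $B_{r_0}\subseteq E$ together with subadditivity one checks that $j_E$ is positively $1$-homogeneous, convex, and $L$-Lipschitz on $\mathbb{R}^N$ with $L:=1/r_0$. Then I would set $j_n:=j_E\ast\rho_{1/n}$, where $\rho_{1/n}$ is a standard radial mollifier supported in $B_{1/n}$. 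This gives $j_n\in C^\infty(\mathbb{R}^N)$, a uniform bound $\|j_n-j_E\|_{L^\infty(\mathbb{R}^N)}\le L/n$, and convexity of $j_n$: for $u,v\in\mathbb{R}^N$ and $\lambda\in[0,1]$, writing the convolution as $\int j_E(x-w)\rho_{1/n}(w)\,dw$ and applying convexity of $j_E$ inside the integral gives $j_n(\lambda u+(1-\lambda)v)\le \lambda j_n(u)+(1-\lambda)j_n(v)$.

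Next I would define $E_n:=\{j_n<1\}$. For $n\ge n_0$ large, $j_n(0)\le j_E(0)+L/n=L/n<1$, so $E_n$ is a nonempty open convex set; boundedness will follow from the inclusion proved below. The crucial smoothness statement is that $\partial E_n$ is a $C^\infty$ hypersurface: by the implicit function theorem it suffices that $\nabla j_n\ne 0$ on $\{j_n=1\}$. Here I use the $C^1$-convex characterization of critical points: a convex function of class $C^1$ has $\nabla f(x)=0$ if and only if $x$ is a global minimum. Since $\min j_n\le j_n(0)<1$, no point of $\{j_n=1\}$ is a minimum of $j_n$, and hence $\nabla j_n\ne 0$ on this level set.

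The sandwich inclusion \eqref{contiene} then falls out of the uniform bound $\|j_n-j_E\|_\infty\le L/n$ together with $1$-homogeneity: if $x\in(1-L/n)E$ then $j_E(x)<1-L/n$, so $j_n(x)\le j_E(x)+L/n<1$ and $x\in E_n$; conversely, if $x\in E_n$ then $j_E(x)\le j_n(x)+L/n<1+L/n$, whence $x\in(1+L/n)E$. Thus $C_E=1/r_0$ works, and $E_n\subseteq(1+C_E/n)E$ gives boundedness. For the perimeter, I would invoke the classical monotonicity of the perimeter along nested convex bodies (if $A\subseteq B$ are open bounded convex sets, then $\mathcal{H}^{N-1}(\partial A)\le\mathcal{H}^{N-1}(\partial B)$, via the $1$-Lipschitz nearest-point projection onto $\overline{A}$), together with the scaling $\mathcal{H}^{N-1}(\partial(tE))=t^{N-1}\mathcal{H}^{N-1}(\partial E)$. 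Combined with \eqref{contiene} these bound
\[
\bigl(1-C_E/n\bigr)^{N-1}\mathcal{H}^{N-1}(\partial E)\le \mathcal{H}^{N-1}(\partial E_n)\le \bigl(1+C_E/n\bigr)^{N-1}\mathcal{H}^{N-1}(\partial E),
\]
and letting $n\to\infty$ gives the required convergence.

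The main obstacle I foresee is verifying the smoothness of $\partial E_n$: the mollified function $j_n$ is convex but in general not strictly convex, so a priori its level sets could fail to be smooth manifolds. The point that rescues the argument is precisely the $C^1$-convex fact that the gradient only vanishes at global minima, which avoids having to add an auxiliary strictly-convex perturbation to $j_n$. Everything else in the proof reduces to routine Lipschitz estimates and standard convex-geometric manipulations.
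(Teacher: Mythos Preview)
Your proposal is correct and follows essentially the same route as the paper: both translate so that $0\in E$, mollify the Minkowski functional to obtain $j_n$, define $E_n=\{j_n<1\}$, derive the sandwich inclusion from the Lipschitz bound $\|j_n-j_E\|_{\infty}\le C_E/n$, prove smoothness of $\partial E_n$ via the fact that critical points of a $C^1$ convex function are global minima (and $j_n(0)<1$), and deduce perimeter convergence from monotonicity of perimeter under inclusion of convex bodies together with scaling. Your write-up is slightly more explicit in identifying $C_E=1/r_0$ and in justifying convexity preservation under convolution, but the argument is the same.
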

\begin{proof}
We first show that the last property follows from \eqref{contiene}. Indeed, since all the sets involved are convex, by using the monotonicity of the $\mathcal{H}^{N-1}$ measure of the boundaries with respect to set inclusion (see \cite[Lemma 2.2.2]{BuBu}), we have 
\[
\left(1-\frac{C_E}{n}\right)^{N-1}\,\mathcal{H}^{N-1}(\partial E)\le \mathcal{H}^{N-1}(\partial E_n)\le \left(1+\frac{C_E}{n}\right)^{N-1}\,\mathcal{H}^{N-1}(\partial E),\qquad \mbox{ for every } n\ge n_0.
\]
By taking the limit as $n$ goes to $\infty$, we get the desired conclusion.
\par
In order to construct the sequence $\{E_n\}_{n\in\mathbb{N}}$, we suppose for simplicity that $0\in E$ and introduce the {\it Minkowski functional} of $E$, i.e.
\[
j(x)=\inf\Big\{\lambda>0\, :\, x\in \lambda\, E\Big\}.
\]
This is a positively $1-$homogeneous convex function, which is globally Lipschitz continuous and such that for every $\ell>0$ we have
\[
j(x)<\ell\qquad \mbox{ if and only if }\qquad x\in \ell\,E.
\]
Moreover, we have $j(x)\ge 0$ for every $x\in\mathbb{R}^N$ and $j(x)=0$ if and only if $x=0$.
We then set $j_n=j\ast \varrho_n$, where $\{\varrho_n\}_{n\ge 1}$ is the usual family of standard mollifiers. Observe that this is a non-negative smooth convex function. 
We define 
\[
E_n=\Big\{x\in\mathbb{R}^N\, :\, j_n(x)<1\Big\}.
\]
Let us call $C_E$ the Lipschitz constant of $j$. Then we take $x\in (1-C_E/n)\,E$, so that 
\[
j(x)<\left(1-\frac{C_E}{n}\right).
\]
By using the definition of $j_n$, we have 
\[
\begin{split}
j_n(x)=\int_{B_\frac{1}{n}(0)} j(x-y)\,\varrho_n(y)\,dy&=\int_{B_\frac{1}{n}(0)} [j(x-y)-j(x)]\,\varrho_n(y)\,dy+j(x)\\
&\le \frac{C_E}{n}+j(x)<\frac{C_E}{n}+\left(1-\frac{C_E}{n}\right)=1.
\end{split}
\]
This shows the validity of the leftmost inclusion in \eqref{contiene}. In a similar way, if $x\in E_n$ then we have
\[
\begin{split}
1>j_n(x)=\int_{B_\frac{1}{n}(0)} [j(x-y)-j(x)\,\varrho_n(y)]\,dy+ j(x)\ge -\frac{C_E}{n}+j(x),
\end{split}
\]
that is $j(x)<1+C_E/n$, which shows the validity of the rightmost inequality in \eqref{contiene}, as well. 
\par
We are left with observing that $\partial E_n$ coincides with the level line $\{x\in\mathbb{R}^N\, :\, j_n(x)=1\}$. Such a level line is smooth, since it does not contain critical points of $j_n$: indeed, we recall that for a convex function every critical point is automatically a global minimum point. On the other hand, by using the Lipschitz character of $j$ and the fact that $j(0)=0$, we have
\[
j_n(0)=\int_{B_\frac{1}{n}(0)} j(-y)\,\varrho_n(y)\,dy\le \frac{C_E}{n},
\] 
and the latter is strictly less than $1$, for $n$ large enough. This shows that the value $1$ can not be the minimum of $j_n$ and thus $\partial E_n=\{x\in\mathbb{R}^N\, :\, j_n(x)=1\}$ does not contain any critical value of $j_n$.
\end{proof}



\medskip

\end{document}